\theoremstyle{plain}
\newtheorem{thm}{Theorem}[section]
\newtheorem*{thm*}{Theorem}
\newtheorem{prop}[thm]{Proposition}
\newtheorem{cor}[thm]{Corollary}
\theoremstyle{definition}
\newtheorem{defn}[thm]{Definition}
\newtheorem{rmk}[thm]{Remark}
\newtheorem{example}[thm]{Example}
\newcommand{\loc}{{\rm loc}}
\newcommand{\CP}{\mathbb{CP}}
\newcommand{\C}{\ensuremath{\mathcal{C}}}
\newcommand{\sm}{\setminus}
\newcommand{\ra}{\rightarrow}
\newcommand{\supp}{{\rm supp}}
\newcommand{\pol}{{\rm pol}}
\newcommand{\field}[1]{\mathbb{#1}}
\newcommand{\CC}{\ensuremath{\field{C}}} 
\newcommand{\RR}{\ensuremath{\field{R}}} 
\newcommand{\NN}{\ensuremath{\field{N}}} 
\newcommand{\QQ}{\ensuremath{\field{Q}}} 
\newcommand{\DD}{\ensuremath{\field{D}}} 
\newcommand{\ZZ}{\ensuremath{\field{Z}}}
\newcommand{\eps}{\epsilon}
\newcommand{\comm}[1]{}
\newcommand{\showcomments}{no}
\newsavebox{\commentbox}
\newenvironment{comment}%
{\ifthenelse{\equal{\showcomments}{yes}}%
{\footnotemark
    \begin{lrbox}{\commentbox}
    \begin{minipage}[t]{1in}\raggedright\sffamily\small
    \footnotemark[\arabic{footnote}]}
{\begin{lrbox}{\commentbox}}}%
{\ifthenelse{\equal{\showcomments}{yes}}%
{\end{minipage}\end{lrbox}\marginpar{\usebox{\commentbox}}}
{\end{lrbox}}}
\begin{document}

\title{A dichotomy for Fatou components\\ of polynomial skew products}

\author[R.K.W. ~Roeder]{Roland K.\ W.\ Roeder$^1$}
\address{IUPUI Department of Mathematical Sciences\\
LD Building, Room 270\\
402 North Blackford Street\\
Indianapolis, Indiana 46202-3216\\ USA}
\email{rroeder@math.iupui.edu}

\date{\today}

\begin{abstract}
We consider polynomial maps of the form $f(z,w) = (p(z),q(z,w))$ that extend as
holomorphic maps of $\CP^2$.  Mattias Jonsson introduces in \cite{JON_SKEW} a
notion of connectedness for such polynomial skew products that is analogous to
connectivity for the Julia set of a polynomial map in one-variable.  We prove
the following dichotomy: if $f$ is an Axiom-A polynomial skew
product, and $f$ is connected, then every Fatou component of $f$ is
homeomorphic to an open ball; otherwise, some Fatou component of $F$ has
infinitely generated first homology.
\end{abstract}

\subjclass[2010]{Primary 32H50; Secondary 37F20, 57R19}

\keywords{Fatou components, linking numbers, closed currents, holomorphic motions}

\maketitle

\markboth{\textsc{R.K.W. Roeder}}
  {\textit{Dichotomy for polynomial skew products
  }}

\footnotetext[1]{Research was supported in part by startup funds from the Department of Mathematics at IUPUI.}

\section{Introduction}

We consider the dynamics of mappings $f:\CC^2 \ra \CC^2$ of the form
\begin{eqnarray}
f(z,w) = (p(z), q(z,w))
\end{eqnarray}
\noindent
where $p$ and $q$ are polynomials.  It will be convenient to assume that ${\rm
deg}(p) = {\rm deg} (q) = d$ and $p(z) = z^d + O(z^{d-1})$ and $q(z,w) = w^d
+O_z(w^{d-1})$ so that $f$ extends as a holomorphic mapping $f:\CP^2 \ra
\CP^2$.   (Throughout this paper we will assume that $d \geq 2$.) Since $f$
preserves the family of vertical lines $\{z \} \times \CC$, one can analyze $f$
via the collection of one variable fiber maps $q_z(w) = q(z,w)$, for each $z
\in \CC$.  In particular, one can define fiber-wise filled Julia sets $K_z$ and
Julia sets $J_z$ that are analogous to their one-dimensional counterparts.  For
this reason, polynomial skew products provide an accessible generalization of
one variable dynamics to two variables.  They been previously studied by many
authors, including Heinemann in \cite{HEIN1,HEIN2}, Jonsson in \cite{JON_SKEW},
DeMarco-Hruska in \cite{S}, and Hruska together with the author of this note in~\cite{HRUSKA_ROEDER}.  

A more general situation in which the base is allowed to
be an arbitrary compact topological space, while the vertical fibers are copies
of $\CC$, has been considered by Sester in \cite{SESTER1,SESTER2}.  Meanwhile,
generalization to semigroups of polynomial (and rational) mappings of the
Riemann sphere has been studied by extensively by Hinkkanen, Martin, Ren,  Stankewitz, Sumi, Urba{\'n}ski, 
and many others---we refer the reader to the excellent bibliography from
\cite{SUMI_III} for further references.

As an analogy with polynomial maps of one variable, 
\begin{defn}{\bf (Jonsson)} \label{DEFN}
A polynomial skew product $f$ is {\em connected} if $J_p$ is connected and $J_z$ is connected for all $z \in J_p$.
\end{defn}
\noindent
Here, $J_p$ is the Julia set for the ``base map'' $z \mapsto p(z)$.
Jonsson proves in \cite[Sec. 6]{JON_SKEW} that if $f$ is a connected polynomial
skew product, then the support $J_2 := \supp \mu$ of its measure of maximal entropy $\mu$ (see \S
\ref{SEC:POLY_SKEW}, below) is connected, the Hausdorff dimension of $\mu$ is
$2$, and the associated Lyapunov exponents are precisely $\log d$.  (Note that
$J_2$ can be connected even if $f$ is a disconnected polynomial skew product; see
\cite[Lemma 5.5]{S}.)

The Fatou set $U(f)$ of a holomorphic map $f: \CP^2 \ra \CP^2$ is the maximal
open set in $\CP^2$ on which the sequence of iterates is normal and the Julia set is its complement:
$J(f) := \CP^2 \sm U(f)$.  In
this note, we prove
\begin{thm}\label{THM}
Suppose that $f$ is an Axiom-A polynomial skew product.  If $f$ is connected, then every Fatou component of $f$ is homeomorphic to an open ball.  Otherwise, some Fatou component of $f$ has infinitely generated first homology.
\end{thm}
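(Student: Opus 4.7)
My plan is to analyze each Fatou component $\Omega$ of $f$ through its projection $\pi(z,w) = z$ to a Fatou component $U := \pi(\Omega)$ of the base map $p$, exploiting two consequences of Axiom-A: that $p$ is a hyperbolic polynomial (so $J_p$ is connected or a Cantor set), and that the fiber Julia sets $J_z$ form a holomorphically moving family over $J_p$. Throughout, the Green current of $f$ and linking numbers will be used to detect (or rule out) nontrivial first homology.

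\textbf{Connected direction.} I would first show that both $U$ and every slice $\Omega_z := \Omega \cap \pi^{-1}(z)$ is a topological disk. That $U$ is a disk follows from $J_p$ being connected (when $U = A_p(\infty)$) or from classical one-variable hyperbolic polynomial theory (when $U$ is an attracting basin of $p$). That each slice is a disk follows from $J_z$ being connected for $z \in J_p$, extended to $z$ in the Fatou set of $p$ via the holomorphic motion, combined with the fact that $\Omega_z$ is a union of Fatou components of the one-variable polynomial governing the fiber, each of which is a disk under our hypothesis. To pass from ``base and fibers are disks'' to ``$\Omega$ is a topological ball,'' I would use the Axiom-A structure to deformation retract $\Omega$ onto a small ball neighborhood of an attracting periodic orbit of $f$ in $\CP^2$ (or of a super-attracting fixed point at infinity); the simultaneous contractibility of base and slice should ensure that no new topology is created in the retraction.

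\textbf{Disconnected direction.} Split into two subcases. If $J_p$ is disconnected then, since $p$ is hyperbolic, $J_p$ is a Cantor set and $A_p(\infty) \subset \CC$ has infinitely generated $H_1$, with explicit generators given by small loops around individual points of $J_p$. Lifting such a loop $\gamma_c$ to $A_p(\infty) \times \{w_0\} \subset \Omega_\infty$, where $\Omega_\infty$ is the Fatou component of $f$ containing this product, its homology class is detected by linking with the vertical fiber $\{c\} \times \CP^1$ (equivalently, by pairing with the current of integration along this fiber, a piece of the closed support of the Green current of $f$); this gives infinitely many independent classes in $H_1(\Omega_\infty)$. If instead $J_p$ is connected but $K_{z_0}$ is disconnected for some $z_0 \in J_p$, I would use density of periodic points in $J_p$ to take $z_0$ periodic, pick a loop $\gamma \subset \{z_0\} \times \CC$ in the fiber Fatou set linking a bounded component $C$ of $K_{z_0}$, and detect its class in the containing Fatou component $\Omega$ of $f$ by pairing with a fiber restriction of the Green current near $C$. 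Iterating $f$ and pulling $\gamma$ back along preimages of $z_0$ in $J_p$ then produces infinitely many candidate loops in $\Omega$.

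\textbf{Main obstacle.} The principal technical challenge is the second disconnected subcase: showing that these pulled-back loops are homologically \emph{independent} in $H_1(\Omega)$ rather than merely nontrivial. My plan is to attach a distinct linking functional to each preimage component of $C$ (under high iterates of $f$) by restricting the Green current appropriately; support disjointness, guaranteed by the Markov/symbolic structure of the Axiom-A base map $p$ together with the holomorphic motion of $K_z$, would make the pairing matrix of loops against functionals essentially diagonal, establishing infinite generation of $H_1(\Omega)$. Controlling how these pulled-back loops interact with the Fatou structure of $f$ globally—so that they genuinely lie in a single Fatou component and their linking classes are not inadvertently killed by loops outside the chosen current family—is the most delicate step.
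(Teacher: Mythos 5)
There are genuine gaps in both directions, and they concern precisely the mechanisms the paper is built on. In the connected case, the step from ``the base $U$ and each slice $\Omega_z$ is a disk'' to ``$\Omega$ is a ball'' is not a proof: an open set in $\CC^2$ fibered over a disk with contractible slices need not be a ball unless you produce an actual trivialization, and ``deformation retract onto a small ball near the attracting orbit, no new topology should be created'' does not supply one. The paper's route is to build a holomorphic motion of $J_{z_0}$ (resp.\ $J_\Pi$) over each Fatou component of $p$ and extend it to the whole fiber by Slodkowski's theorem, so that every bounded Fatou component over $U$ is literally a moved disk. The heart of that argument --- which your proposal never touches --- is showing that the vertical critical curve $C$ over each Fatou component of $p$, including over every preimage $U_{k+1}$ of an immediate basin, lies in the Fatou set of $f$; this uses normality of $\{f^n\}$ restricted to $C$ (via the identity $G|_C = G_p\circ\pi$, valid because $C_z\subset K_z$ when $f$ is connected) together with the Axiom-A postcritical condition $D_{J_p}\cap J_2=\emptyset$ and lower semicontinuity of $z\mapsto J_z$. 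Without that, the holomorphic motion cannot be lifted and the slices $\Omega_z$ cannot even be identified, so your ``extended via the holomorphic motion'' step is circular.

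In the disconnected case your detection scheme fails in the key subcase where $J_p$ is a Cantor set but all $J_z$, $z\in J_p$, are connected. Loops of the form $\gamma_c\subset A_p(\infty)\times\{w_0\}$ either lie in $W^s([0:1:0])$ --- which in this subcase is homeomorphic to a ball by Theorem \ref{THM:HR}, so every such class dies --- or there is no control that they lie in the Fatou set at all; and linking with the vertical line $\{c\}\times\CP^1$ is an integer, hence trivial in $\RR/\ZZ$. The paper instead takes the loops \emph{inside the critical curve} $C_{U_\infty}$: Corollary \ref{COR:SETUP2} places them in basins of attracting cycles of $f_\Pi$, and because $T$ restricted to $C$ equals $\pi^*\mu_p$, the linking numbers come out as $(d-1)\left<\Upsilon_i,\mu_p\right>$, nonzero but arbitrarily close to $0$ mod $1$, which is exactly what Theorem \ref{THM:GENERAL_TECHNIQUE} needs for infinite generation. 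Finally, the independence problem you flag as the ``main obstacle'' (pulled-back loops over a periodic $z_0$ with $J_{z_0}$ disconnected) is left unresolved in your sketch; the paper sidesteps it entirely by quoting Theorem \ref{THM:HR}, which already gives infinitely generated $H_1(W^s([0:1:0]))$ whenever some $J_z$, $z\in J_p$, is disconnected. As written, then, the proposal is missing the critical-locus/normality input, the current-restriction computation that produces fractional linking numbers, and the accumulation argument that converts them into infinitely many independent classes.
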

\noindent
The Axiom-A assumption ensures hyperbolicity of $f$ on the non-wandering set.
See, for example, \cite[Sec. 8]{JON_SKEW} and \cite{S}.  
Using the equivalence between (1)
and (2) from Theorem \ref{THM:JON} (below), connectivity of $f$ can be
expressed in terms of ``escape of the critical locus'', so Theorem \ref{THM} is
an analog of the fundamental dichotomy from polynomial dynamics in one variable
(see, for example \cite[Theorem 9.5]{MILNOR}).

\begin{cor}\label{COR} 
If $f$ is an Axiom-A polynomial skew product for which $J_2 = \supp \mu$ is disconnected,
then some Fatou component of $f$ has infinitely generated first homology.
\end{cor}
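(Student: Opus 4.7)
The plan is to deduce the corollary as an immediate consequence of Theorem \ref{THM}, combined with the implication, due to Jonsson, that connectedness of a polynomial skew product forces $J_2$ to be connected. Specifically, Jonsson shows in \cite[Sec. 6]{JON_SKEW}---as recalled in the introduction above---that if $f$ is a connected polynomial skew product in the sense of Definition \ref{DEFN}, then $J_2 = \supp \mu$ is connected. Taking the contrapositive, if $J_2$ is disconnected then $f$ must fail to be connected: either $J_p$ is disconnected, or $J_z$ is disconnected for some $z \in J_p$.

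Once we know $f$ is disconnected, Theorem \ref{THM} places us in the second branch of its dichotomy, and we immediately obtain a Fatou component of $f$ with infinitely generated first homology. The only real content of the argument is the invocation of Jonsson's one-way implication; no further work is needed, and there is no serious obstacle beyond matching hypotheses. Note that the reverse implication is known to fail (see \cite[Lemma 5.5]{S}), so Corollary \ref{COR} is strictly weaker than Theorem \ref{THM}, but has the virtue that its hypothesis is phrased directly in terms of topological connectivity of the single set $J_2$, which is often easier to verify in examples than the fiberwise condition of Definition \ref{DEFN}.
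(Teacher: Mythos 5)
Your proposal is correct and is exactly the argument the paper intends (though the paper leaves it implicit): Jonsson's result that connectedness of $f$ implies connectedness of $J_2$, applied in contrapositive form, followed by the disconnected branch of Theorem \ref{THM}. Your observation that the converse fails, so Corollary \ref{COR} is strictly weaker than Theorem \ref{THM}, also matches the paper's remark in the introduction.
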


We will discuss in Remark \ref{RMK:HYPERBOLICITY} the natural question of
whether Theorem \ref{THM} holds if $f$ is not Axiom-A.
We first show that some hypothesis of hyperbolicity is needed, by providing an
example of a disconnected product mapping that is not Axiom-A, whose Fatou set
has a single component that is homeomorphic to an open ball.  We will then show
that the proof of Theorem \ref{THM} that is presented in Section \ref{SEC:MAIN}
continues to hold if we replace Axiom-A with slightly weaker conditions.

\subsection*{Acknowledgments}
Many of the techniques and ideas from this paper were originally inspired by
John H. Hubbard during discussions that we had when I was his student (5 years ago).  I have
also benefited greatly from discussions with Laura DeMarco, Mattias Jonsson,
Suzanne Hruska, Lex Oversteegen, and Rodrigo Perez.  

The anonymous referee has provided many helpful comments leading to substantial improvements in the paper.

\section{Background on polynomial skew products}
\label{SEC:POLY_SKEW}

We present some background on polynomial endomorphisms of $\CP^2$ from
\cite{FS2,HP2,UEDA,SI1,BEDFORD_JONSSON} and on polynomial skew products from
\cite{JON_SKEW}. 

Let $f: \CC^2 \ra \CC^2$ be given by
\begin{eqnarray*}
f(z,w) = (p(z,w),q(z,w)),
\end{eqnarray*}
with $p$ and $q$ polynomials of degree $d \geq 2$, and let $p_d$ and $q_d$ be
the homogeneous parts of $p$ and 
$q$, respectively, of the  maximal degree $d$.  
Such a mapping extends as a holomorphic mapping $f: \CP^2 \ra
\CP^2$ if and only if $p_d(z,w) = 0 = q_d(z,w)$ implies that $(z,w) = (0,0)$.
Any holomorphic mapping $f: \CP^2 \ra \CP^2$ that is obtained in this way is
called a {\em polynomial endomorphism} of $\CP^2$.

Suppose that $f: \CC^2 \ra \CC^2$ extends as a polynomial endomorphism of $\CP^2$.
The (affine) Green's function
\begin{eqnarray}\label{EQN:LOCAL_GREEN}
G(z,w) = \lim \frac{1}{d^n} \log_+ ||f^n(z,w)||, \,\,\, \mbox{where} \,\,\, \log_+ :=
\max\{\log,0\},
\end{eqnarray}
\noindent
is a plurisubharmonic (PSH) function having the
property that $(z,w) \in \CC^2$ is in $U(f)$ if and only if $G(u,v)$ is
pluriharmonic (PH) in a neighborhood of $(z,w)$.  
The {\em Green's current} $T:=\frac{1}{2\pi}dd^c
G$ is a closed positive $(1,1)$ current on $\CC^2$ with
$J(f):= \supp T$ and $U(f):= \CC^2 \sm \supp T$.

If $f$ is a polynomial skew product, i.e. $p(z,w) \equiv p(z)$, there is a
refinement of this description.  The base map $p(z)$ has a Julia set $J_p
\subset \mathbb{C}$ and, similarly, a Green's function $G_p(z):= \lim_{n \to
\infty} \frac{1}{d^n} \log_+ ||p^n(z)||$.  Furthermore, one can define a
fiber-wise Green's function by:
\begin{eqnarray*}
G_z(w) := G(z,w) - G_p(z).
\end{eqnarray*}
\noindent
For each fixed $z$, $G_z(w)$ is a subharmonic function of $w$  and one defines
the fiber-wise Julia sets by $K_z := \{G_z(w) = 0\}$ and $J_z := \partial K_z$.

The extension of $f$ to $\CP^2$ is given by
\begin{eqnarray} \label{EQN:SKEW_HOMOG}
f([Z:W:T]) = [P(Z,T):Q(Z,W,T):T^d],
\end{eqnarray} \noindent
where $P(Z,T)$ and $Q(Z,W,T)$ are the homogeneous versions of $p$ and $q$.
The point $[0:1:0]$ that is ``vertically at infinity'' with respect to the affine coordinates $(z,w)$ is a totally invariant super-attracting fixed point and $(z,w) \in W^s([0:1:0])$ if and only if
$w \in \CC \setminus K_z$.

The action of $f$ on the line at infinity $\Pi:=\{T=0\}$ is given by the
polynomial map $f_\Pi(u) = q_d(1,u)$, where $u = w/z$ and
$q_d(z,w)$ is the homogeneous part of $q$ of maximal degree $d$.  As usual, one
can consider the associated Julia sets $K_\Pi$ and $J_\Pi$.

One can extend the Green's current $T$ as a closed
positive $(1,1)$ current on all of $\CP^2$ satisfying $f^* T = d \cdot T$ and
having the property that $J(f) = \supp T$ and $U(f)~=~\CP^2~\sm~\supp~T$.
Moreover, the wedge product $\mu:=T \wedge T$ is a measure satisfying $f^* \mu
= d^2 \cdot \mu$, which happens to be the measure of maximal entropy for $f$.
It is customary to define a second Julia set $J_2 \equiv J_2(f) := \supp \mu
\subset J(f)$.

If $f=(p,q)$ is a polynomial skew product, then \cite[Cor. 4.4]{JON_SKEW} gives
\begin{eqnarray*}
J_2 = \overline{\bigcup_{z \in J_p} \{z\} \times J_z}.
\end{eqnarray*} 

The following is (an excerpt from) Theorem 6.5 from \cite{JON_SKEW}:
\begin{thm}{\bf (Jonsson)}\label{THM:JON}
The following are equivalent:
\begin{enumerate}
\item $f$ is connected.
\item $C_p \subset K_p$ and $C_z \subset K_z$, for all $z \in J_p$.
\item $J_p$ is connected, $J_\Pi$ is connected, and $J_z$ is connected, for all $z \in \CC$.
\item $C_p \subset K_p$, $C_\Pi \subset K_\Pi$, and $C_z \subset K_z$, for all $z \in \CC$.
\end{enumerate}
\noindent
Here $C_p$ and $C_\Pi$ are the critical points of $p$ and $f_\Pi$, while $C_z := \{(z,w) : \partial q/\partial w = 0\}$.
\end{thm}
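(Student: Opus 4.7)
I would establish the equivalences via the cycle $(4)\Rightarrow(3)\Rightarrow(1)\Rightarrow(2)\Rightarrow(4)$, with the last implication carrying the technical weight. Two one-variable tools provide the setup. First, the classical Fatou--Julia criterion: for any polynomial $P$ of degree $\ge 2$, $J_P$ is connected iff $C_P\subset K_P$; apply this separately to $p$ and to $f_\Pi$. Second, a fiberwise analog. The composition $Q_{n,z}(w):=q_{p^{n-1}(z)}\circ\cdots\circ q_z(w)$ is a polynomial of degree $d^n$ whose critical set is, by the chain rule, the iterated pullback of $C_{p^k(z)}$ for $0\le k<n$. Since $K_z=\bigcap_n Q_{n,z}^{-1}(\bar D_R)$ for $R$ large enough to contain every $K_\zeta$, a standard Riemann--Hurwitz / preimage-of-a-disc argument yields: $J_z$ is connected iff $C_{p^k(z)}\subset K_{p^k(z)}$ for every $k\ge 0$.

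With these tools in hand, $(3)\Leftrightarrow(4)$ is immediate, $(3)\Rightarrow(1)$ is a restriction to $z\in J_p$, and $(1)\Rightarrow(2)$ uses $p(J_p)=J_p$: varying $z$ over $J_p$ and $k$ over $\NN$, the points $p^k(z)$ exhaust $J_p$, so the condition ``$J_z$ connected for every $z\in J_p$'' collapses to ``$C_\zeta\subset K_\zeta$ for every $\zeta\in J_p$.''

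For the main implication $(2)\Rightarrow(4)$, which must upgrade $C_z\subset K_z$ from $z\in J_p$ to all $z\in\CC$ and additionally produce $C_\Pi\subset K_\Pi$, I would work with the critical-defect function $\tilde V(z):=\sum_{j} G(z,w_j(z))$, where $w_j$ runs over the $d-1$ local branches of $C=\{\partial q/\partial w=0\}$ (the sum is single-valued despite the branching, being a symmetric function of the roots of $\partial q/\partial w$ viewed as a polynomial in $w$). This $\tilde V$ is non-negative and subharmonic on $\CC$ (pullback of the PSH function $G$ summed over sheets), and by (2) it vanishes identically on $J_p$. On a bounded Fatou component $F$ of $p$, we have $G_p\equiv 0$ and $\partial F\subset J_p$, so $\tilde V$ is subharmonic on $F$ with zero boundary values; the maximum principle forces $\tilde V\equiv 0$ on $F$, which is equivalent to $C_z\subset K_z$ for $z\in F$. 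On the basin of infinity of $p$, $G_p$ is pluriharmonic, so $\tilde V-(d-1)G_p$ is subharmonic with zero values on the finite boundary $J_p$ and a limit at $\infty$ that, via the projective closure $\bar C\subset\CP^2$ and the homogeneous form \eqref{EQN:SKEW_HOMOG}, equals a sum of $G_\Pi(\alpha_j)$ where $\alpha_j$ are the slopes $w_j(z)/z\to\alpha_j$ of the branches of $\bar C$ meeting $\Pi$, i.e.\ the points of $C_\Pi$. A max-principle comparison on this basin then ties $C_z\subset K_z$ for $z$ near $\infty$ to the vanishing of the asymptotic defect at $\Pi$.

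The main obstacle I anticipate is that the two conclusions must be bootstrapped simultaneously: the asymptotic constant of $\tilde V-(d-1)G_p$ at $\infty$ is exactly $\sum_j G_\Pi(\alpha_j)$, which vanishes iff $C_\Pi\subset K_\Pi$. To close the loop one has to run a parallel argument near $\Pi$ -- a defect sum $\tilde W$ on $\Pi$ built from the intersections of $\bar C$ with $\Pi$, subharmonic on the basin of $\infty$ of $f_\Pi$ and controlled by $\tilde V$ from the $\CC^2$ side -- so that the two one-sided inequalities collapse the defect to zero on both sides, yielding $C_z\subset K_z$ everywhere and $C_\Pi\subset K_\Pi$ together. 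The subtle part is identifying these asymptotic constants precisely, using continuity of the Green's function $G$ on $\CP^2$ together with the algebraic structure of $\bar C$ (in particular distinguishing transverse from tangential intersections with $\Pi$); this is where the pluripotential-theoretic bookkeeping is genuinely delicate.
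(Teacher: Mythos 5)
The paper does not prove this statement at all: it is quoted verbatim as (an excerpt from) Theorem 6.5 of Jonsson's paper \cite{JON_SKEW}, so there is no in-paper argument to compare against. Measured against Jonsson's actual proof, your plan is essentially the standard one and is sound in outline: the Fatou--Julia criterion for $p$ and $f_\Pi$, the fiberwise criterion via the compositions $Q_{n,z}$ (this is Jonsson's connectivity analysis of the fiber Julia sets), and, for the main implication, a subharmonic critical-defect sum over the branches of $C$ propagated off $J_p$ by the maximum principle (this is exactly how Jonsson proves his Proposition 6.4, which is the content of $(2)\Rightarrow(4)$).

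The one place where your sketch is both vaguer and more complicated than necessary is the analysis at the line at infinity. You cannot invoke ``continuity of the Green's function $G$ on $\CP^2$'': $G$ blows up along $\Pi$. The correct tool is that the \emph{fiberwise} Green's function $G_z(w)$ extends plurisubharmonically (indeed continuously) across $\Pi\setminus\{[0:1:0]\}$ in the coordinates $t=T/Z$, $u=W/Z$ --- this is Jonsson's Lemma 6.3, quoted in the present paper as the decomposition $G_\Omega(t,u)=G_t(u)+G_p^{\#}(t)$, with $G_0(u)=0$ characterizing $K_\Pi$. With that extension, the defect sum $\sum_j G_t\bigl(u_j(t)\bigr)$ over the branches of the closure of $C$ (recall $C\cap\Pi=C_\Pi$ and $[0:1:0]\notin C$) is subharmonic on the unbounded Fatou component of $p$ \emph{up to and including} $t=0$, vanishes on the boundary $J_p$, and is nonnegative; a single application of the maximum principle then gives $C_z\subset K_z$ on the basin of infinity and $C_\Pi\subset K_\Pi$ simultaneously. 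So the ``bootstrap'' you anticipate, with a parallel defect function on $\Pi$ and two one-sided inequalities, is not needed; but without the extension lemma (or an equivalent renormalization of $G$ near $\Pi$) your asymptotic-constant argument has no subharmonicity through $t=0$ to run on, and that is the ingredient your plan must supply to be complete.
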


The following appears in \cite{HRUSKA_ROEDER}:
\begin{thm}{(\bf Hruska-R)}\label{THM:HR}
If $J_z$ is connected for every $z \in J_p$, then $W^s([0:1:0])$ is homeomorphic to an open ball.
Otherwise, the first homology $H_1(W^s([0:1:0]))$ is infinitely generated.
\end{thm}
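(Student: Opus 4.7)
Under the vertical projection $(z,w)\mapsto z$, the basin $W^s([0:1:0])$ projects onto the $z$-line with fiber $\{z\}\times(\CP^1_w\sm K_z)$, since $(z,w)\in W^s([0:1:0])$ precisely when $w$ escapes under iteration of the fiber map $q_z$. The topology of $W^s([0:1:0])$ is therefore encoded in how the planar domains $\CP^1\sm K_z$ vary with $z$, and my plan is to treat the two cases by (a) recognizing a disk bundle structure when $K_z$ is always connected, and (b) manufacturing infinitely many independent cycles by pulling back a single non-trivial loop when $K_z$ is disconnected somewhere on $J_p$.

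\textbf{Connected case.} Assume $K_z$ is connected for every $z\in J_p$. Each fiber $\CP^1\sm K_z$ is then a topological disk, and a separate comparison argument (inverse iteration along $p$, together with the B\"ottcher-type analysis near the super-attracting fixed point $[0:1:0]$) extends connectedness of $K_z$ to all $z\in\CC$. Uniform convergence of the fiberwise Green's functions $G_z$ on compact sets shows that the fiberwise B\"ottcher maps $\phi_z:\CP^1\sm K_z\to\DD$, normalized by $\phi_z(\infty)=0$, depend continuously on $z$, so $\pi:W^s([0:1:0])\to\CC_z$ is a topological disk bundle over a contractible base, hence trivial. This gives $W^s([0:1:0])\cong\CC\times\DD$, homeomorphic to an open ball in $\RR^4$.

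\textbf{Disconnected case.} Pick $z_0\in J_p$ with $K_{z_0}$ disconnected and choose a loop $\gamma_0\subset\CC\sm K_{z_0}$ separating one component of $K_{z_0}$ from the rest. For each $n\geq 0$, the preimage $f^{-n}(\gamma_0)$ is a disjoint union of $d^n$ loops $\gamma_{n,1},\ldots,\gamma_{n,d^n}$, lying in the fibers over the $d^n$ points of $p^{-n}(z_0)\subset J_p$. To see that these span an infinite-rank subgroup of $H_1(W^s([0:1:0]))$, I would construct linking-number pairings built from closed $(1,1)$-currents supported on fiberwise pieces of $\{(z,w):w\in K_z\}$, and exploit the invariance $f^*T=d\cdot T$ of the Green's current to promote a single non-trivial linking at level $0$ to non-trivial linkings at every level $n$. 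The resulting pairing matrix would then be essentially block upper-triangular with non-zero diagonal, precluding any finite homological relation among the classes $[\gamma_{n,j}]$.

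\textbf{Main obstacle.} The hard step is the disconnected direction: producing infinitely many geometrically distinct loops is straightforward, but certifying that they represent independent classes in the non-compact $4$-manifold $W^s([0:1:0])$ requires explicit dual cohomology or current-valued linking pairings whose $f$-equivariance is sharp enough to transport non-triviality through all levels. The natural tool is a fiberwise decomposition of the Green's current combined with its functional equation, but setting this up so that the pairings descend to the open Fatou component $W^s([0:1:0])$ and genuinely detect the combinatorics of $f^{-n}$-preimages of $K_{z_0}$ is the genuine technical difficulty.
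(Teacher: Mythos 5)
Your proposal has genuine gaps in both halves; note also that the paper itself only quotes this theorem from \cite{HRUSKA_ROEDER}, where the ball case is proved by using $G_z(w)$ as a Morse-type function and the disconnected case by linking with the Green's current $T$ (the machinery recalled in Section \ref{SEC:LINKING}). In the connected case, your claim that $\pi:W^s([0:1:0])\to\CC_z$ is a locally trivial disk bundle misdescribes the basin. First, $W^s([0:1:0])$ is an open subset of $\CP^2$ containing the point $[0:1:0]$ itself (the indeterminacy point of $\pi$, common to the closures of \emph{all} vertical lines) and the piece $\Pi\sm K_\Pi$ of the line at infinity; only $W^s([0:1:0])\cap\CC^2$ projects to $\CC_z$, and there the fibers are $\{z\}\times(\CC\sm K_z)$, which are annuli, not disks---the ``disk'' $\CP^1\sm K_z$ includes the vertical point at infinity, which is the single point $[0:1:0]$ shared by every fiber, so no bundle with those fibers exists inside $\CP^2$. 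Second, even for the affine part, local triviality is precisely the delicate point: $z\mapsto K_z$ is only semicontinuous (the paper invokes lower semicontinuity of $z\mapsto J_z$), so continuity of the fiberwise uniformizations does not follow merely from convergence of the Green's functions, and you would still have to glue in the locus at infinity and verify the result is a ball. You also use connectedness of $K_z$ for all $z\in\CC$, which must be deduced from the hypothesis (connectivity of $J_z$ only over $J_p$) via $C_z\subset K_z$ for all $z$ (Jonsson's Proposition 6.4); you only gesture at this. These are exactly the difficulties the Morse-theoretic argument with $G_z(w)$ is designed to avoid.

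In the disconnected case you have the right ingredients (loops in fibers over $J_p$, the relation $f^*T=d\cdot T$, pairings with currents) but, as you acknowledge, you stop short of the step that actually certifies infinite generation, and the ``block upper-triangular pairing matrix'' heuristic is not the mechanism that works. The working argument uses the single homomorphism $lk(\cdot,T):H_1(W^s([0:1:0]))\to\RR/\ZZ$: a loop $\gamma_0$ in the fiber over $z_0$ separating components of $K_{z_0}$ bounds a disk in that fiber whose pairing with $T$ equals the $\mu_{z_0}$-mass of the enclosed part of $J_{z_0}$, a number strictly between $0$ and $1$; pulling back under $f^n$ and using Proposition \ref{PROP:PAIRING_INV} together with $f^*T=d\cdot T$ produces classes in $W^s([0:1:0])$ whose linking numbers are this value divided by $d^n$, hence nonzero in $\RR/\ZZ$ yet arbitrarily close to $0$. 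Since a finitely generated subgroup of $\QQ/\ZZ$ is finite (so its nonzero elements are bounded away from $0$), $H_1$ must be infinitely generated---this is Theorem \ref{THM:GENERAL_TECHNIQUE}. Producing ``infinitely many geometrically distinct loops'' is not enough, and independence arguments via a pairing matrix against fiberwise currents would require exactly the quantitative control you say you cannot set up; the accumulation-at-zero argument for a single linking homomorphism is what replaces it.
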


\noindent
The proof that $W^s([0:1:0])$ is homeomorphic to a ball is an application of $G_z(w)$ as a type of Morse function, while the latter
uses non-trivial linking numbers between closed loops in $W^s([0:1:0])$ and the Green's current $T$.

\section{Background on linking with a closed positive $(1,1)$ current in $\CP^2$.}
\label{SEC:LINKING}

We present a brief summary (without proofs) of material from \cite[\S 3]{HRUSKA_ROEDER}.
See also \cite{ROE_NEWTON}.

Any closed positive $(1,1)$ current $S$ on a complex manifold $N$ can be
described using an open cover $\{U_i\}$ of $N$ together with PSH functions
$v_i: U_i \ra [-\infty,\infty)$ that are  chosen so that $S=dd^c v_i$ in each
$U_i$.  The functions $v_i$ are called {\em local potentials} for $S$ and they
are required to  satisfy the compatibility condition that $v_i - v_j$ is PH on
any non-empty intersection $U_i \cap U_j \neq \emptyset$.  The {\em support} of
$S$ and {\em polar locus} of $S$ are defined by:
\begin{eqnarray*}
\supp S &:=& \{z \in N \, : \, \mbox{if $z \in U_j$ then $v_j$ is not PH at $z$}\}, \,\, \mbox{and} \\
\pol S &:=& \{z \in N \, : \, \mbox{if $z \in U_j$ then $v_j(z) = -\infty$}\}.
\end{eqnarray*}
The compatibility condition assures that that above sets are well-defined.  Moreover,
since PH functions are never $-\infty$, we have $\pol S \subset \supp S$.

Let $M$ be another complex manifold, possibly of dimension different from that
of $N$.  If $f:M \rightarrow N$ is a holomorphic map with $f(M) \not \subset
\pol S$, then the pull-back $f^*S$ is a closed positive $(1,1)$ current defined
on $M$ by pulling back the system of local potentials for $S$ to form a system
of local potentials on $M$ that define $f^*S$.  See \cite[Appendix A.7]{SI1}
and \cite[p.  330-331]{HP2} for further details.

Given any closed positive $(1,1)$ current $S$ on $N$ and any piecewise smooth two chain $\sigma$ in $N$ with $\partial \sigma$ disjoint from $\supp \ S$,
we can define
\begin{eqnarray*}
\left<
\sigma,S \right> = \int_\sigma \eta_S,
\end{eqnarray*}
\noindent
where $\eta_S$ is a smooth approximation of $S$ within it's cohomology class in
$N-\partial \sigma$, see \cite[pages 382-385]{GH}. The resulting number $\left< \sigma,S
\right>$ will depend only on the cohomology class of $S$ and the homology class
of $\sigma$ within $H_2(N,\partial \sigma)$.  If $\sigma$ is
a holomorphic chain, this pairing simplifies to be
the integral of the measure\footnote{This measure is well-defined, since $\partial \sigma$ is
disjoint from $\supp S \supset \pol S$.} $\sigma^ * S$ over $\sigma$.  

The following invariance property is useful:
\begin{prop}\label{PROP:PAIRING_INV}
Suppose that $S$ is a closed positive $(1,1)$ current on $N$ and $f:M
\rightarrow N$ is holomorphic, with $f(M) \not \subset \pol S$.
If $\sigma$ is a piecewise smooth two chain in $M$ with $\partial \sigma$
disjoint from $\supp f^* S$, then
$\left< f_* \sigma, S \right> = \left< \sigma, f^*S \right>$.
\end{prop}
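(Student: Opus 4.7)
The plan is to reduce the identity to the corresponding naturality statement for smooth forms, by exploiting the smooth approximation built into the definition of the pairing. First, I would choose a smooth closed $(1,1)$-form $\eta_S$ on $N$ that is cohomologous to $S$ on $N \setminus f(\partial \sigma)$, using the regularization construction from Griffiths-Harris referenced just above. By the definition of the pairing $\langle \cdot , \cdot \rangle$, this gives
$$\langle f_*\sigma, S \rangle = \int_{f_*\sigma} \eta_S.$$

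The crucial step is then to verify that $f^*\eta_S$ is a smooth representative of the cohomology class of $f^*S$ on $M \setminus \partial \sigma$. Smoothness of $f^*\eta_S$ is immediate from holomorphicity of $f$. For the cohomology identification I would work with local potentials: choose an open cover $\{U_i\}$ of $N \setminus f(\partial \sigma)$ on which $S = dd^c v_i$ with $v_i$ PSH and $\eta_S = dd^c \tilde v_i$ with $\tilde v_i$ smooth. By the definition of the pull-back of currents recalled from \cite{SI1,HP2}, the families $\{v_i \circ f\}$ and $\{\tilde v_i \circ f\}$ are local potentials for $f^*S$ and $f^*\eta_S$ on the open cover $\{f^{-1}(U_i)\}$ of $M \setminus \partial \sigma$. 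The local relation $\eta_S - S = dd^c (\tilde v_i - v_i)$ therefore pulls back to $f^*\eta_S - f^*S = dd^c((\tilde v_i - v_i)\circ f)$, exhibiting $f^*\eta_S$ and $f^*S$ as cohomologous on $M \setminus \partial \sigma$. Hence, again by the definition of the pairing,
$$\langle \sigma, f^*S \rangle = \int_\sigma f^*\eta_S.$$

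Finally, the identity $\int_{f_*\sigma} \eta_S = \int_\sigma f^*\eta_S$ is the standard change-of-variables formula for integration of a smooth form along a piecewise smooth chain; it is valid for any smooth (in particular holomorphic) $f$ and any smooth form $\eta_S$, directly from the definition of the push-forward of a singular chain. Combining the three displayed equalities yields the proposition. The main obstacle is the middle step: one must arrange the smoothing $\eta_S$ so that its pull-back is a legitimate smooth representative for the pairing computing $\langle \sigma, f^*S \rangle$, which is exactly where the hypothesis $\partial \sigma \cap \supp f^*S = \emptyset$ enters, since it guarantees that the local potentials $v_i \circ f$ can be replaced by their smooth counterparts $\tilde v_i \circ f$ on a neighborhood of $\partial \sigma$ without altering the cohomology class on $M \setminus \partial \sigma$.
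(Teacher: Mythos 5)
The paper states Proposition \ref{PROP:PAIRING_INV} without proof (it is quoted as background from \cite{HRUSKA_ROEDER}), so your argument has to stand on its own. Your overall strategy---compute both pairings with a single smooth representative $\eta_S$ and its pull-back $f^*\eta_S$, finishing with the change-of-variables identity $\int_{f_*\sigma}\eta_S=\int_\sigma f^*\eta_S$---is the natural and intended route. The trouble is precisely the step you flag as crucial: your justification that $f^*\eta_S$ represents the class of $f^*S$ on $M\setminus\partial\sigma$ is a non sequitur. You only check that $f^*S-f^*\eta_S$ is $dd^c$-exact on each member $f^{-1}(U_i)$ of an open cover. But every closed positive $(1,1)$ current and every smooth closed $(1,1)$ form admit local potentials on small balls, so the identical reasoning would ``prove'' that $f^*S$ is cohomologous on $M\setminus\partial\sigma$ to the pull-back of an arbitrary smooth closed $(1,1)$ form (say $\eta=0$), which is false whenever $f^*S$ has a nonzero class there. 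Local exactness of the difference carries no information about the global class; the global hypothesis that $S-\eta_S$ is exact on $N\setminus f(\partial\sigma)$ is never actually used, and it cannot be transported simply by pulling back a $1$-current primitive, since arbitrary currents do not pull back under holomorphic maps.

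The repair is to transport a single global potential rather than local ones: arrange (for instance by taking $\eta_S$ to be a regularization built from the potentials of $S$, or by the $dd^c$-lemma in the case $N=\CP^2$, which is the only case used in this paper) that $S-\eta_S=dd^c u$ for one $L^1_{\loc}$ function $u$ on the relevant region; then $u\circ f\not\equiv-\infty$ because $f(M)\not\subset\pol S$, and $f^*S-f^*\eta_S=dd^c(u\circ f)$, which is the honest reason the two classes agree. Two smaller points also need a sentence each: $f^*\eta_S$ is a priori defined only on $M\setminus f^{-1}\bigl(f(\partial\sigma)\bigr)$, which can be strictly smaller than $M\setminus\partial\sigma$ since interior points of $\sigma$ may map into $f(\partial\sigma)$; and the hypothesis $\partial\sigma\cap\supp f^*S=\emptyset$ gives $f(\partial\sigma)\cap\pol S=\emptyset$ but not $f(\partial\sigma)\cap\supp S=\emptyset$, so you should say in what sense the left-hand pairing $\left< f_*\sigma,S\right>$ is defined (in the applications in this paper both boundaries do avoid the relevant supports). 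With these points addressed, your argument goes through.
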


Notice that $H_2(\CP^2)$ is generated by the class of any complex projective line
$L \subset \CP^2$.  Since $S$ is non-trivial, $\left<L,S\right> \neq 0$, so that
after an appropriate rescaling we can assume that $\left<L,S\right> =1$. 
This normalization is satisfied by
the Green's Current $T$ that was defined in \S \ref{SEC:POLY_SKEW}.

\begin{defn}\label{DEFN:LK}
Let $S$ be a normalized closed positive $(1,1)$ current on $\CP^2$ and let $\gamma$
be a piecewise smooth closed curve in $\CP^2 \setminus \supp(S)$.
We define the {\em linking number}
$lk(\gamma,S)$ by
\begin{eqnarray*}
lk(\gamma,S) := \left< \Gamma,S \right> \ (\text{mod} \ 1)
\end{eqnarray*}
\noindent
where $\Gamma$ is any piecewise smooth two chain with $\partial \Gamma =
\gamma$.
\end{defn}

Unlike linking numbers between closed loops in $\mathbb{S}^3$, it is often the
case that that $\left< \Gamma,S \right> \not \in \ZZ$, resulting in non-zero
linking numbers $(\text{mod} \ 1)$.  

\begin{prop}\label{PROP:LINKING_DEPENDS_ON_HOMOLOGY}
If $\gamma_1$ and $\gamma_2$ are homologous in $H_1(\CP^2 \setminus \supp \ S)$, then
$lk(\gamma_1,S) = lk(\gamma_2,S)$.
\end{prop}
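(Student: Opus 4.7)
The plan is to exhibit bounding 2-chains for $\gamma_1$ and $\gamma_2$ that differ by a 2-chain lying entirely in $\CP^2 \setminus \supp S$, and then to argue that pairing $S$ with such a chain yields zero.

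First, by hypothesis, fix a piecewise smooth 2-chain $C \subset \CP^2 \setminus \supp S$ with $\partial C = \gamma_1 - \gamma_2$. Since $H_1(\CP^2) = 0$, there exists a piecewise smooth 2-chain $\Gamma_2 \subset \CP^2$ with $\partial \Gamma_2 = \gamma_2$. Set $\Gamma_1 := \Gamma_2 + C$, so that $\partial \Gamma_1 = \gamma_1$. Each of $\gamma_1$, $\gamma_2$, and $\partial C = \gamma_1 - \gamma_2$ avoids $\supp S$, so the three pairings $\langle \Gamma_1, S \rangle$, $\langle \Gamma_2, S \rangle$, and $\langle C, S \rangle$ are all well-defined. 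By linearity of the pairing in the 2-chain slot,
\[
\langle \Gamma_1, S \rangle \;-\; \langle \Gamma_2, S \rangle \;=\; \langle C, S \rangle.
\]
Hence it suffices to prove $\langle C, S \rangle = 0$, as this gives $\langle \Gamma_1, S\rangle = \langle \Gamma_2, S\rangle$ on the nose (and a fortiori modulo $1$).

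For this, note that $C$ is compact and disjoint from the closed set $\supp S$, so one can pick an open neighborhood $V$ of $\supp S$ with $\overline V \cap C = \emptyset$. By the standard smoothing of closed positive $(1,1)$ currents on the compact K\"ahler manifold $\CP^2$, one may then choose a smooth closed form $\eta_S$, cohomologous to $S$ in $\CP^2 \setminus \partial C$, whose support is contained in $V$. Since $C \cap V = \emptyset$, we get $\langle C, S \rangle = \int_C \eta_S = 0$, and therefore $lk(\gamma_1, S) = lk(\gamma_2, S)$.

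The principal technical point is the existence of a smooth representative $\eta_S$ of the cohomology class of $S$ (on $\CP^2 \setminus \partial C$) supported in an arbitrarily small neighborhood of $\supp S$. This is a Demailly-style regularization statement for positive currents on $\CP^2$, and it is exactly the sort of ingredient that underlies the framework of \cite[\S 3]{HRUSKA_ROEDER} being summarized here; once it is granted, everything else is formal manipulation of chains and boundaries.
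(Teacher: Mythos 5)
Your argument is correct, and it is essentially the standard one: the paper itself states Proposition \ref{PROP:LINKING_DEPENDS_ON_HOMOLOGY} without proof (Section \ref{SEC:LINKING} is an explicit summary of \cite[\S 3]{HRUSKA_ROEDER}), and the proof there follows the same route you take --- write a bounding chain for $\gamma_1$ as a bounding chain for $\gamma_2$ plus a chain $C$ lying in $\CP^2 \setminus \supp S$, then show $\left<C,S\right>=0$ because the smooth representative of the class of $S$ can be taken with support in a small neighborhood of $\supp S$. Your use of the particular chain $\Gamma_1=\Gamma_2+C$ is legitimate since Definition \ref{DEFN:LK} allows any bounding chain (well-definedness mod $1$ being part of the framework, via $\left<L,S\right>=1$ and the fact that the pairing depends only on the class in $H_2(\CP^2,\partial\sigma)$). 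One small correction: the support-controlled smoothing you need is de Rham's regularization of currents (which keeps the support of the regularized form, and of the homotopy term, inside any prescribed neighborhood of $\supp S$), not Demailly-type regularization of positive currents, which controls positivity and Lelong numbers but does not shrink supports; with that substitution your key step is exactly the one underlying the cited framework.
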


Moreover,
since the pairing $\left<\cdot,S\right>$ is linear in the space
of chains $\sigma$ (having $\partial \sigma$ disjoint from $\supp \ S$), the linking number descends to a homomorphism:
\begin{eqnarray*}
lk(\cdot,S): H_1(\CP^2 \setminus \supp \ S) \rightarrow \RR/\ZZ.
\end{eqnarray*}
Similarly $lk(\cdot,S) :H_1(\Omega) \rightarrow \RR/\ZZ$ for any open $\Omega \subset \CP^2 \setminus \supp \ S$.

\begin{thm}\label{THM:GENERAL_TECHNIQUE}
Suppose that $f:\CP^2 \rightarrow \CP^2$ is a holomorphic endomorphism and $\Omega
\subset U(f)$ is contained in a union of basins of attraction of attracting periodic points
for $f$.  If there are $c \in H_1(\Omega)$ with linking number $lk(c,T) \neq 0$
arbitrarily close to $0$ in $\QQ/\ZZ$, then $H_1(\Omega)$ is infinitely
generated.
\end{thm}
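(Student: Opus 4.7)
I would argue by contradiction using the homomorphism
\[
lk(\cdot, T) \colon H_1(\Omega) \longrightarrow \RR/\ZZ
\]
established in the discussion preceding the theorem. Suppose toward a contradiction that $H_1(\Omega)$ is finitely generated; then its image $\Lambda \subset \RR/\ZZ$ under this homomorphism is also a finitely generated abelian group. The goal is to derive a contradiction from the existence of nonzero elements of $\Lambda \cap (\QQ/\ZZ)$ arbitrarily close to $0$.

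The crux is purely algebraic. An element $x \in \RR/\ZZ$ lies in $\QQ/\ZZ$ if and only if it has finite order, so $\Lambda \cap (\QQ/\ZZ)$ is exactly the torsion subgroup of $\Lambda$. Since $\Lambda$ is a finitely generated abelian group, its torsion subgroup is finite. Thus $\Lambda \cap (\QQ/\ZZ)$ is a finite subgroup of $\RR/\ZZ$; being finite, it is cyclic of some order $n$, and its nonzero elements all lie at distance at least $1/n$ from $0$. This contradicts the hypothesis and completes the argument.

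There is no serious obstacle in the abstract argument once this algebraic framework is in place; the main issue is recognizing that the combination ``finitely generated'' with ``lives in $\QQ/\ZZ$'' forces finiteness. The assumption that $\Omega$ is contained in a union of basins of attracting periodic points does not enter this abstract deduction directly; rather, I expect it to be used to \emph{verify} the hypothesis in the applications that will invoke this theorem. Specifically, in the proof of Theorem \ref{THM}, one would expect to produce the required classes $c$ by taking a single well-chosen loop and pushing it forward under iterates of $f$, using the relation $lk(f_* c, T) = d \cdot lk(c, T) \pmod{1}$ that follows from $f^* T = d \cdot T$ together with Proposition \ref{PROP:PAIRING_INV}. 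Forward invariance of $\Omega$ under an appropriate iterate of $f$, supplied by the basin hypothesis after passing to a common period of the attracting orbits, is what makes such a pushforward construction legitimate inside $\Omega$ and typically yields linking numbers of the form $k/d^n \pmod{1}$, which are rational and approach $0$.
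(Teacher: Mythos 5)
Your argument is correct: the linking pairing is indeed a homomorphism $lk(\cdot,T)\colon H_1(\Omega)\to\RR/\ZZ$ (as set up just before the statement), the image of a finitely generated group is finitely generated, its torsion subgroup --- which is exactly its intersection with $\QQ/\ZZ$ --- is then finite, and a finite subgroup of $\RR/\ZZ$ has its nonzero elements bounded away from $0$; this contradicts the hypothesis. Note, however, that this paper only quotes the theorem from \cite{HRUSKA_ROEDER} without proof, and the argument given there is routed differently: the basin hypothesis is used to show that \emph{every} class in $H_1(\Omega)$ has rational linking number with denominator a power of $d$ (push a representing cycle forward by a high iterate into a small ball about the attracting cycle, where the linking number is $0$, and use $f^*T=d\cdot T$ together with Proposition \ref{PROP:PAIRING_INV} to conclude $lk(c,T)\in\ZZ[1/d]/\ZZ$), so that a finitely generated $H_1(\Omega)$ would force the whole image to be a finite subgroup of $\RR/\ZZ$, whence the same contradiction. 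Your route is more economical because you exploit the rationality already written into the hypothesis (``in $\QQ/\ZZ$''), and consequently you never use the basin hypothesis; the source's route explains why that hypothesis is natural --- it would also cover the situation where one only knows the small linking numbers are nonzero and tend to $0$, without knowing a priori that they are rational. One small remark on your final paragraph: in the application in Section \ref{SEC:MAIN} the small rational linking numbers are not produced by pushing a single loop forward, but by taking preimages under $\pi|_C$ of cycles $\upsilon_i$ in the base bounding regions with $\mu_p$-measure tending to $0$; this does not affect the validity of your proof of the theorem itself.
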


\section{Proof of the Main result} 
\label{SEC:MAIN}

We recall the characterization of Axiom-A polynomial skew products from \cite[\S 8]{JON_SKEW}.
(For the actual definition of Axiom-A, see, for example, \cite[Def. 8.1]{JON_SKEW}.)
Let $A_p$ be the set of attracting periodic points of $p$ and
consider the following postcritical sets:
\begin{eqnarray*}
D_p &:=& \overline{\bigcup_{n \geq 1} p^n C_p}, \,\, \mbox{where} \,\, C_p = \{z \, : \, p'(z) = 0\}, \\
D_{J_p} &:=& \overline{\bigcup_{n \geq 1} f^n C_{J_p}}, \,\, \mbox{where} \,\, C_{J_p} := \{(z,w) \, : \, z \in J_p, q_z'(w) = 0\}, \\
D_{A_p} &:=& \overline{\bigcup_{n \geq 1} f^n C_{A_p}}, \,\, \mbox{where} \,\, C_{A_p} := \{(z,w) \, : z \in A_p, q_z'(w) = 0\}, \,\, \mbox{and} \\
D_\Pi &:=& \overline{\bigcup_{n \geq 1} f_\Pi^n C_\Pi},  \,\, \mbox{where} \,\, C_{\Pi} := \{\lambda \, : f_\Pi'(\lambda) = 0\}.
\end{eqnarray*}

Let 
\begin{eqnarray*}
J_{A_p} = \overline{\bigcup_{z \in A_p} \{ z\} \times J_z}.
\end{eqnarray*}
The following appears as \cite[Cor. 8.3]{JON_SKEW}:
\begin{prop}\label{PROP:AXA}
A polynomial skew product $f=(p,q)$ is Axiom-A on $\CP^2$ if and only if:
\begin{enumerate}
\item $D_p \cap J_p = \emptyset$,
\item $D_{J_p} \cap J_2 = \emptyset$,
\item $D_{A_p} \cap J_{A_p} = \emptyset$, and
\item $D_\Pi \cap J_\Pi = \emptyset$.
\end{enumerate}
\end{prop}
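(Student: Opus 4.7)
The plan is to leverage the skew-product structure: the base dynamics $p$ lifts to horizontal dynamics on $\CP^2$, and on each $p$-invariant piece of the base the fiber maps $q_z$ supply an independent one-variable system in the vertical direction. This allows the non-wandering set $\Omega(f)$ to be decomposed into a small number of forward-invariant pieces, on each of which hyperbolicity is controlled by exactly one of the four post-critical conditions.

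I would first identify the pieces of $\Omega(f)$. Since $\pi_z \circ f = p \circ \pi_z$, where $\pi_z$ is the projection onto the base coordinate, the base component of a non-wandering point must itself be non-wandering for $p$ and hence lie in $J_p \cup A_p$; similarly, the part of $\Omega(f)$ sitting on the line at infinity $\Pi$ coincides with the non-wandering set of $f_\Pi$. This yields the decomposition
\begin{equation*}
\Omega(f) = A_p \sqcup J_2 \sqcup J_{A_p} \sqcup \Omega_{f_\Pi} \sqcup \{[0:1:0]\}
\end{equation*}
(suitably lifted into $\CP^2$), where the first piece consists of attracting periodic points of $p$ carried over their own basins and the last is the super-attracting fixed point.

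Next, I would verify that hyperbolicity on each Julia-like piece is equivalent to the corresponding condition. Conditions (1) and (4) recover the classical one-variable statement that a polynomial is hyperbolic on its Julia set if and only if its critical orbits do not accumulate on that set, applied to $p$ on $J_p$ and to $f_\Pi$ on $J_\Pi$. For (2) and (3), the skew structure provides a $Df$-invariant filtration of $T\CP^2$ restricted to $\Omega(f)$ whose sub-bundle is the vertical tangent space and whose quotient carries the base dynamics; the vertical action is given by the one-variable map $q_z$ in each fiber. Uniform expansion in the vertical direction on $J_2$ (respectively $J_{A_p}$) therefore amounts to the fiber critical orbits emanating from $C_{J_p}$ (respectively $C_{A_p}$) staying uniformly away from $J_2$ (respectively $J_{A_p}$), which is condition (2) (respectively (3)). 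Combined with horizontal expansion from (1) on $J_2$ and horizontal contraction along $A_p$, this produces a full hyperbolic splitting on each piece.

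The main obstacle is to promote these pointwise hyperbolicities into a single uniform hyperbolic structure on all of $\Omega(f)$ and to manage the compactification: one must extend horizontal and vertical cone fields continuously across the line at infinity, verify that $J_2$, $J_{A_p}$, and $\{[0:1:0]\}$ are mutually disjoint closed subsets of $\CP^2$, and check density of periodic points on each piece---routine on $J_p$ and $J_\Pi$ from one-variable theory, and obtained on $J_2$ and $J_{A_p}$ by lifting periodic fiber points via the implicit function theorem once (2) and (3) guarantee vertical expansion. The converse direction (Axiom-A implies all four conditions) is comparatively clean: hyperbolicity on the invariant subsets $J_p$ and $J_\Pi$ forces (1) and (4) directly, while projecting the Axiom-A splitting onto the vertical factor over $J_2$ and $J_{A_p}$ yields (2) and (3).
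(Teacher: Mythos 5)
First, a remark on what you are comparing against: the paper gives no proof of Proposition~\ref{PROP:AXA} at all --- it is quoted as Corollary~8.3 of Jonsson \cite{JON_SKEW}, where it follows from his Theorem~8.2 (Axiom~A is equivalent to: $p$ hyperbolic, $f$ vertically expanding over $J_p$, $f$ vertically expanding over $A_p$, and $f_\Pi$ hyperbolic) together with a characterization of vertical expansion in terms of postcritical sets. Your overall architecture --- decompose $\Omega(f)$, match each of (1)--(4) to hyperbolicity on a piece, and translate vertical expansion into a postcritical condition --- is the same in spirit, but as written it has a genuine gap.

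The gap is your claimed decomposition of the non-wandering set, and it sits exactly where the real work lies. Already for $f(z,w)=(z^2,w^2)$, which satisfies (1)--(4), the circle $\{|z|=1\}\times\{0\}$ (attracting fiber fixed points sitting over the repelling base circle) is a saddle basic set contained in $\Omega(f)$; it lies over $J_p$ but is disjoint from $J_2$, from $J_{A_p}$, from the attracting cycles of $f$, and from the line at infinity, so it belongs to none of the pieces in your displayed union. In general, over $J_p$ the non-wandering set can meet the fiberwise Fatou set (attracting fiber cycles over repelling base cycles and their closures), and under Axiom~A these are hyperbolic sets whose splitting has a \emph{different} index than on $J_2$. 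Consequently your forward implication establishes hyperbolicity and density of periodic points only on part of $\Omega(f)$: you still must show that (1)--(4) force uniform vertical \emph{contraction} and a finite basic-set structure on these leftover sets over $J_p$, and that (2) rules out non-hyperbolic fiber phenomena (parabolic points and Siegel boundaries lie in $J_z\subset J_2$ and attract or accumulate fiber critical orbits, so they would violate $D_{J_p}\cap J_2=\emptyset$); none of this is a formality. A second, lesser gap: the step you dispose of in one sentence --- that uniform vertical expansion on $J_2$ (respectively $J_{A_p}$) is \emph{equivalent} to $D_{J_p}\cap J_2=\emptyset$ (respectively $D_{A_p}\cap J_{A_p}=\emptyset$) --- is the technical heart of Jonsson's result; it is a fibered analogue of the one-variable hyperbolicity criterion and requires a uniform-in-$z$ argument (for instance via the hyperbolic metric on the complement of the postcritical set), not a fiber-by-fiber appeal to the one-variable statement, and the same caveat applies to the converse direction where you ``project the splitting onto the vertical factor.''
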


In particular, if $f$ is Axiom-A, then
\begin{itemize}
\item every Fatou component of $p$ (respectively $f_\Pi$) is the basin of attraction of an attracting
cycle whose immediate basin contains a critical point from $C_p$ (respectively $C_\Pi$), and
\item $C_z \cap J_z = \emptyset$ over every $z\in A_p \cup J_p$.
\end{itemize}

\vspace{0.05in}

We can describe the fiberwise dynamics as follows: Let $L := \{W=0\}$ be the
horizontal projective line.  The vertical projection $\pi(z,w) = (z,0)$
induces a rational map $\pi: \CP^2 \ra L$ whose only point of indeterminacy is
$[0:1:0]$ (which blows up under $\pi$ to the entire line $L$).  Although $L$
is not invariant under $f$, we can consider the action of $p$ on the line $L$
and we let $\mu_p$ be the harmonic measure on $J_p$, i.e. $\mu_p := dd^c
G_p(z)$.  Within $\CP^2$ we have the vertical current $T_p := \pi^* \mu_p$.

Throughout the remainder of the paper we will denote by $C$ the closure in
$\CP^2$ of the ``vertical critical points'' $\{(z,w) \,:\, q_z'(w) = 0\}$.
It is an algebraic curve of degree $d-1$ 
with $C_z = C \cap (z \times \CC)$ and  $C \cap \Pi = C_\Pi$.  Since $q_z(w) = w^d + O_z(w)$, we have that
$[0:1:0] \not \in C$.  Therefore, $\pi: C \ra L$ is a branched covering of degree 
$\deg C = d-1$.

\begin{prop}\label{PROP:SETUP1}
If $f$ is a polynomial skew product with $J_z$ connected for every $z \in J_p$, then: 
\begin{enumerate}
\item For any piecewise-smooth two-chain $\Gamma \subset C$, we have $\left<\Gamma,T \right> = \left<\Gamma,T_p\right>$.
\item Let $U$ be any Fatou component $U$ of $p$ and $C_U: = \pi_{|C}^{-1}(U)$.  The sequence of restrictions $\{f^n_{|C_U}\}$ is a 
normal family\footnote{In other words,
$\{f^n \circ \rho\}$ is a normal family for any parameterization $\rho$ of $C_U$ (see below).}.
\end{enumerate}
\end{prop}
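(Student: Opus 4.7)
For Part (1), I would use Jonsson's potential decomposition $G(z,w)=G_p(z)+G_z(w)$, in which $G_z$ is known to be plurisubharmonic on $\CC^2$. Consequently $T-T_p=\tfrac{1}{2\pi}\,dd^c G_z$ is a closed positive $(1,1)$ current on $\CP^2$. Both $T$ and $T_p=\pi^*\mu_p$ represent the hyperplane class $[L]\in H^{1,1}(\CP^2)$, so $T-T_p$ is cohomologically trivial. Because $G_z$ is continuous on $\CC^2$ and $[0{:}1{:}0]\notin C$, the current $T-T_p$ has empty polar locus near $C$, and the pull-back $(T-T_p)|_C$ is a well-defined non-negative measure on $C$ whose total mass equals the intersection number $[T-T_p]\cdot[C]=0$. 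Hence $(T-T_p)|_C\equiv 0$, which immediately gives $\langle\Gamma,T\rangle=\langle\Gamma,T_p\rangle$ for every two-chain $\Gamma\subset C$.

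For Part (2), I first upgrade Part (1) to the pointwise conclusion $G_z\equiv 0$ on $C$. The vanishing $(T-T_p)|_C=0$ is exactly the statement that $G_z|_C$ is harmonic on the smooth part of $C$. The hypothesis together with the equivalence (1) $\iff$ (2) of Theorem \ref{THM:JON} (applied fiberwise) gives $C_z\subset K_z$ for each $z\in J_p$, so $G_z$ vanishes on the non-empty set $C\cap\pi^{-1}(J_p)$. Near the finitely many points of $C\cap\Pi=C_\Pi$, the ratio $w/z$ tends to the corresponding critical point of $f_\Pi$, and the asymptotic $G(z,w)=G_p(z)+O(1)$ (valid when $w/z$ stays bounded) shows that $G_z|_C$ is bounded there. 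Hence $G_z|_C$ extends to a bounded harmonic function on the compact Riemann surface $\widetilde C$ that normalizes $C$; the maximum principle on $\widetilde C$ forces this extension to be constant, and the boundary vanishing fixes the constant at zero. So $G_z\equiv 0$ on $C$, equivalently $w\in K_z$ for every $(z,w)\in C$.

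To deduce normality of $\{f^n|_{C_U}\}$, fix $(z,w)\in C_U$ and write $f^n(z,w)=(p^n z,Q_n(z,w))$. The functional equation $G\circ f=d\cdot G$ combined with $G_z(w)=0$ gives $G(f^n(z,w))=d^n G_p(z)$ and hence $Q_n(z,w)\in K_{p^n z}$ for all $n$. If $U$ is a bounded Fatou component of $p$, then $G_p\equiv 0$ on $U$, so the orbit lies in the compact filled Julia set $\{G=0\}\subset\CC^2$ and Montel's theorem gives normality. If $U$ is the basin of $\infty$ of $p$, pass to the affine chart $[1{:}u{:}s]$ at infinity with $u=w/z$, $s=1/z$; combining $Q_n w\in K_{p^n z}$ with the growth estimates for $G$ and $G_p$ yields $|Q_n w|/|p^n z|$ uniformly bounded, so the orbit stays in a compact subset of this chart (with $s_n\to 0$), and Montel once more yields normality.

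The main obstacle I anticipate is the boundedness of $G_z|_C$ near the points of $C_\Pi$ needed to apply the maximum principle on $\widetilde C$; this requires careful asymptotic analysis of $G$ and $G_p$ as one approaches the line at infinity. Once $G_z\equiv 0$ on $C$ is established, the two normality cases reduce to routine applications of Montel's theorem.
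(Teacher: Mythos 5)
Your argument for Part (1) contains a fatal error, and it is the load-bearing step. You assert that $T-T_p=\tfrac{1}{2\pi}dd^cG_z$ is a closed \emph{positive} $(1,1)$ current on $\CP^2$ and that it is cohomologically trivial; but these two properties together would force $T-T_p$ to have zero mass against the Fubini--Study form, i.e.\ $T\equiv T_p$, which is absurd (e.g.\ $\supp T$ contains $\{z_0\}\times J_{z_0}$ over attracting points $z_0\notin J_p$). The mistake is a conflation of two different functions: Jonsson's fiberwise Green's function is indeed PSH on $\CC^2$, but it satisfies $G=\max(G_p,G_z)$, not $G=G_p+G_z$; the potential of $T-T_p$ on $\CC^2$ is the \emph{difference} $G-G_p$, which is not PSH in general (already for $f(z,w)=(z^2,w^2)$ one has $dd^c(G-G_p)=-dd^c\log_+|z|\le 0$, and nonzero, on $\{|z|=1,\ |w|>1\}$). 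Without positivity, ``total mass $=[T-T_p]\cdot[C]=0$'' yields nothing. Indeed your Part (1) argument never uses the hypothesis that $J_z$ is connected for $z\in J_p$, and the statement is false without it: for $f(z,w)=(z^2,w^2+10)$ one has $C=\{w=0\}$, the critical point $w=0$ escapes in every fiber, and on $C$ the potential of $T$ is $\max(\log_+|z|,G_q(0))$ with $G_q(0)>0$ (here $G_q$ is the Green's function of $w\mapsto w^2+10$); taking $\Gamma=\{|z|\le r,\ w=0\}$ with $1<r<e^{G_q(0)}$ gives $\left<\Gamma,T\right>=0$ while $\left<\Gamma,T_p\right>=1$. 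The paper's logic runs in the opposite direction: the hypothesis, via Jonsson's Proposition 6.4, gives $C_z\subset K_z$ for \emph{all} $z\in\CC$ and $C_\Pi\subset K_\Pi$ first, hence the fiberwise potential vanishes identically on $C$, so $G$ restricted to $C$ equals $G_p\circ\pi$ in both affine charts, $\rho^*T=\rho^*T_p$ on the normalization, and Part (1) follows.

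Part (2) inherits this gap: your proof that $G_z\equiv 0$ on $C$ rests on the unestablished vanishing $(T-T_p)|_C=0$, and the appeal to the equivalence $(1)\iff(2)$ of Theorem \ref{THM:JON} is not available here, because that theorem concerns connectedness of $f$, which also requires $J_p$ connected---whereas the Proposition is applied in Case 2 of the main theorem precisely when $J_p$ is disconnected. The correct citation (Jonsson's Proposition 6.4) gives $G_z\equiv 0$ on $C$ outright, so your maximum-principle detour on the normalization is both unnecessary and, as written, unfounded. Granting $G_z\equiv0$ on $C$, your Montel argument over bounded Fatou components of $p$ does work (orbits stay in the compact set $\{G=0\}$), but the basin-of-infinity case is only sketched: the bound $|Q_n|/|p^n(z)|\le const$ confines the orbits to a compact subset of $\CP^2\setminus\{[0:1:0]\}$, and maps into such a set are not automatically a normal family, so you would still need a locally uniform two-chart (or graph) argument. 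The paper avoids all of this with the Fornaess--Sibony criterion: $\{f^n\circ\rho\}$ is normal exactly off $\supp\rho^*T=\supp\rho^*T_p$, and the latter lies over $J_p$, hence misses $\widehat{\C}_U$.
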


\begin{proof}

It suffices to consider any irreducible component $\C$ of $C$, which we parameterize
by its normalization $\rho:
\widehat \C \ra \C$; see \cite{GUNNING}.  
Since $\C$ has dimension $1$, $\widehat \C$ is a compact Riemann surface.
The following diagram summarizes the maps $\rho$ and $\pi$:
\begin{diagram}
\widehat \C & \rTo^{\rho} & \C \\
 & \rdTo_{\pi \circ \rho}   & \dTo_{\pi}\\
 &  & L.\\
\end{diagram}

Since $[0:1:0] \not \in \C$, we need only consider $\C$ within two systems of
affine coordinates: the original affine coordinates $(z,w)$ and the coordinates 
$t=T/Z$ and $u = W/Z$, defined in a neighborhood $\Omega$ of $[1:0:0]$.  
In the $(z,w)$ coordinates the Green's current is given by $T =
\frac{1}{2\pi}dd^c G(z,w)$ with
\begin{eqnarray}\label{EQN:GREEN_DECOMP1}
G(z,w) = G_z(w) + G_p(z).
\end{eqnarray}
Meanwhile, in the $(t,u)$ coordinates it is given by $T = \frac{1}{2\pi}dd^c G_\Omega(t,u)$ with
\begin{eqnarray}\label{EQN:GREEN_DECOMP2}
G_\Omega(t,u) = G_t(u) + G_p^\#(t).
\end{eqnarray}
\noindent
Here, $G_p^\#(t)$ is obtained by extending $G_p(1/t) - \log(1/t)$ continuously
through $t = 0$ and $G_t(u)$ is the PSH extension of $G_z(w)$ described in
\cite[Lemma 6.3]{JON_SKEW}.  One has that $(0,u) \in K_\Pi$ if and only if
$G_0(u) = 0$ and, for $t \neq 0$, $(t,u) \in K_{1/t}$ if and only if $G_t(u) =
0$.

Because $J_z$ is connected for every $z \in J_p$, Proposition 6.4 from \cite{JON_SKEW}
gives that $C_z \subset K_z$ for
every $z \in \CC$ and $C_\Pi \subset K_\Pi$.  Therefore, if $(z,w) \in \C \cap
\CC^2$ we have that $G_z(w) = 0$ and (\ref{EQN:GREEN_DECOMP1}) gives that the
restriction of $G$ to $\C \cap \CC^2$ coincides with $G_p(z)$.  Similarly, using
(\ref{EQN:GREEN_DECOMP2}), the restriction of $G_\Omega$ to $\C
\cap \Omega$ is just $G_p^\#(t)$.  

The above calculations give 
\begin{eqnarray}\label{EQN:RESTRICTIONS}
\rho^*  \ T = \rho^* \ T_p = \rho^* \pi^* \mu_p.
\end{eqnarray}
\noindent
Let $\Gamma \subset \C$ be any piecewise smooth two chain and $\widehat \Gamma
\subset \widehat \C$ its lift to the normalization.  Then, 
\begin{eqnarray*}
\left<\Gamma,T\right> = \left<\widehat \Gamma,\rho^* T\right> = \left<\widehat \Gamma,\rho^* T_p\right> =  \left<\Gamma,T_p\right>.
\end{eqnarray*}

There is a general principle that for any complex manifold $M$ and holomorphic map $\phi:M \ra \CP^2$, 
the family $\{f^n \circ \phi\}$ is normal in a neighborhood of $x \in M$ if and only if $x \in M \sm \supp \phi^* T$; See \cite{FS2} and
also \cite[p. 409]{JON_SKEW}.

In our situation, it follows that
\begin{eqnarray}\label{EQN:NORMAL_SEQ}
f^n \circ \rho: \widehat \C \ra \CP^2
\end{eqnarray}
\noindent
is normal at $x \in \widehat \C$ if and only if $x \in \widehat \C \sm
\supp(\rho^* \ T_p)$.    Since $G_p$ is harmonic outside of $J_p$, we have that
$\C_U$ is disjoint from $\supp \ T_p$ and, hence, that $\hat \C_U$ is disjoint from
$\supp (\rho^* \ T_p)$.  Therefore, $f^n \circ \rho$ is normal on $\hat \C_U$.
\end{proof}

\begin{cor}\label{COR:SETUP2}
Suppose that $f$ is Axiom-A and satisfies the hypotheses of Proposition
\ref{PROP:SETUP1}, $z_0$ is an attracting periodic point for $p$ (possibly $z_0
= \infty$), and $U$ is any component of the immediate basin for $z_0$.  Then,
$C_U$ is in the immediate basins for some attracting periodic points of $f$
within $z=z_0$. 
\end{cor}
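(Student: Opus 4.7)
The plan is to combine Proposition \ref{PROP:SETUP1}(2) with the standard fact that, for an Axiom-A holomorphic endomorphism of $\CP^2$, the Fatou set $U(f)$ decomposes as the disjoint union of basins of attracting periodic cycles.

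First, I would show that $C_U \subset U(f)$. Fix any irreducible component $\C$ of $C$, with normalization $\rho: \widehat \C \to \C$, and set $\widehat \C_U := (\pi \circ \rho)^{-1}(U)$. By Proposition \ref{PROP:SETUP1}(2) the family $\{f^n \circ \rho\}$ is normal on $\widehat \C_U$. The Fornaess--Sibony criterion recalled in the proof of Proposition \ref{PROP:SETUP1} says that normality at a point $x \in \widehat \C$ is equivalent to $x \notin \supp(\rho^* T)$, so $\widehat \C_U$ is disjoint from $\supp(\rho^* T)$ and therefore $\rho(\widehat \C_U) \subset \CP^2 \setminus \supp T = U(f)$. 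Running over all irreducible components of $C$ gives $C_U \subset U(f)$.

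Next, I would identify the attracting cycles to which points of $C_U$ are attracted. Let $m$ denote the period of $z_0$ under $p$. Because $f$ is a skew product, $\pi \circ f^n = p^n \circ \pi$, and for any $x \in C_U$ the base coordinate $p^n(\pi(x))$ converges to the $p$-cycle $\{z_0, p(z_0), \dots, p^{m-1}(z_0)\}$ (interpreted appropriately when $z_0 = \infty$). Since $f$ is Axiom-A, every orbit in $U(f)$ is attracted to an attracting cycle, so the orbit of $x$ converges to some attracting cycle $\alpha_x$ of $f$; the convergence of the $z$-coordinate forces $\alpha_x$ to live in the fibers over the $p$-cycle of $z_0$.

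The subtle final step is to upgrade ``in the basin of $\alpha_x$'' to ``in the immediate basin of $\alpha_x$''. I would exploit the skew-product structure: the iterated fiber map $Q := q_{z_{m-1}} \circ \cdots \circ q_{z_0}$ acting on the fiber over $z_0$ is a hyperbolic polynomial in view of Axiom-A condition (3) of Proposition \ref{PROP:AXA}, and its attracting cycles together with their immediate basins move holomorphically as the base point varies in $U$. Structural stability then extends the immediate basin of $\alpha_x$ over all of $U$ as a fibered open set, and a standard holomorphic-motion argument together with the connectedness of $\widehat \C_U$ identifies the Fatou component of $f$ containing any $x \in C_U$ with this fibered immediate basin. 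I expect this last step to be the main obstacle: ruling out the possibility that $C_U$ meets a Fatou component of $f$ that is merely pre-periodic (under $f^m$) to the immediate basin of $\alpha_x$, rather than sitting in the immediate basin itself. The resolution rests on Axiom-A providing enough hyperbolicity in the fibers over the attracting $p$-cycle to transport the fiber-dynamics picture out to the whole basin component $U$ via holomorphic motion.
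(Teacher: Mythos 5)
There are two genuine gaps. First, your opening step is not valid: from normality of $\{f^n\circ\rho\}$ on $\widehat\C_U$, i.e.\ $\widehat\C_U\cap\supp(\rho^*T)=\emptyset$, you conclude $\rho(\widehat\C_U)\subset\CP^2\sm\supp T=U(f)$. The Fornaess--Sibony criterion only equates normality of the \emph{restricted} family with the vanishing of the \emph{pulled-back} current, and $\supp(\rho^*T)$ can be strictly smaller than $\rho^{-1}(\supp T)$: a point of $\C$ can lie in $J(f)$ while $G$ restricted to $\C$ is harmonic there, for example if that part of $\C$ sits inside a stable curve $W^s_\loc(x)$ of a point $x\in J_{z_0}$, where the restricted iterates converge and are perfectly normal. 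The paper is explicitly alert to this distinction: in the induction step of Case 1 of Theorem \ref{THM}, normality of $\{f^n_{|\C_{U_{k+1}}}\}$ is already known, and yet ruling out that $f^{k+1}\C_{U_{k+1}}$ lies in a stable curve (hence in $J(f)$) requires the Axiom-A postcritical condition $D_{J_p}\cap J_2=\emptyset$. So $C_U\subset U(f)$ cannot be extracted from Proposition \ref{PROP:SETUP1}(2) alone; some Axiom-A input about the fiber critical points is needed, and your proposal never uses it.

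Second, the actual content of the corollary --- that each component of $C_U$ lies in an \emph{immediate} basin of an attracting cycle \emph{inside the invariant fiber} $\{z=z_0\}$ --- is exactly the part you defer as ``the main obstacle,'' and the sketch you offer (structural stability, a ``standard holomorphic-motion argument'') is not an argument; moreover the black-box fact you invoke (Fatou set of an Axiom-A endomorphism of $\CP^2$ equals the union of attracting basins) is nontrivial, is not used anywhere in the paper, and even if granted gives only basin membership, as you concede. The paper's proof is short and runs differently: since $f$ is Axiom-A, the fiber critical points $\C_{z_0}=C\cap\{z=z_0\}$ (which lie in $\C_U$) are in the immediate basins of attracting cycles of the hyperbolic one-variable map $f_{|\{z=z_0\}}$; because the fiber $\{z=z_0\}$ is transversely attracting, these cycles are attracting for $f$ on $\CP^2$, so an open subset of each component of $\C_U$ is already in these immediate basins (every component of $\C_U$ meets the fiber over $z_0$, since $\pi:\C_U\ra U$ is a proper branched cover onto $U\ni z_0$ --- note also that $\widehat\C_U$ need not be connected, only its components surject onto $U$); finally, normality of $\{f^n\circ\rho\}$ on $\rho^{-1}(\C_U)$ from Proposition \ref{PROP:SETUP1}(2) propagates this over each whole component, since any locally uniform limit of $f^{n_j}\circ\rho$ takes values in a finite set on an open subset and is therefore constant on the component. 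Your proposal is missing precisely these two ingredients --- the role of $C_{z_0}$ together with transverse attraction of the invariant fiber, and the normality-propagation along $\C_U$ --- which are what pin down the cycles and upgrade ``basin'' to ``immediate basin.''
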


\begin{proof}
Since $f$ is Axiom-A, $\C_{z_0} \subset \C_U$ is in the immediate
basins of attraction for some attracting periodic points of $f_{|\{z=z_0\}}$.
Because the line $z=z_0$ is transversely attracting, these periodic points 
are also attracting for $f$ in $\CP^2$, giving an open subset of $\C_U$
is in these immediate basins. Since
$\{f^n \circ \rho\}$ is normal on $\rho^{-1}(\C_U)$, all of
$\C_U$ is in the union of these immediate basins.
\end{proof}

\vspace{0.2in}

\noindent
{\bf Proof of Theorem \ref{THM}:}\\
\indent
{\em Case 1: $f$ is connected:}\\
\indent
We begin with Fatou components for $f$ that are bounded in $\CC^2$, i.e. those on which $G(z,w) \equiv 0$.

Since $J_p$ is connected and $p$ is hyperbolic, the Fatou set $U(p)$ consists
of the basins of attraction of finitely many attracting periodic orbits
together with the basin of attraction for the superattracting fixed point at
$z=\infty$.  Moreover, each component of these basins is conformally equivalent
to the open unit disc $\DD$.  For simplicity of exposition, we suppose that each attracting periodic orbits for $p$ is
a fixed point (otherwise, one can pass to an appropriate iterate of $f$).

Let $z_0 \in \CC$ be an attracting fixed point of $p$ and let $U_0$ be the
component of its basin of attraction that contains $z_0$. 

Let $x$ and $y$ denote points in $\CC^2$.

Since $f$ is Axiom-A, $J_{z_0}$ is a hyperbolic set, having a local stable
manifold $W^s_\loc(J_{z_0})$ formed as the union of stable curves
$W^s_\loc(x)$ of points $x \in J_{z_0}$ with each $W^s_\loc(x)$ 
being holomorphic.   These stable curves satisfy the invariance
\begin{eqnarray}\label{EQN:INVARIANCE}
f(W^s_\loc(x)) \subset W^s_\loc(f(x)).
\end{eqnarray}
\noindent
They are transverse to the vertical line 
$\{z=z_0\}$ since it is the unstable direction of $J_{z_0}$.
Hence, there is a sufficiently small $\eps > 0$ so that each
$W^s_\loc(x)$ is the graph of a holomorphic function over $z \in
\DD_\eps(z_0) := \{|z - z_0| < \eps\}$.
In other words, this describes $W^s_\loc(J_{z_{0}})$ as a holomorphic motion
(see, for example, \cite[\S 5.2]{HUBB}) over the open disc $z \in
\DD_\eps(z_{0})$.   

Because $f|_{z=z_0}: \{z_0\} \times \CC \ra \{z_0\} \times \CC$ is hyperbolic,
its Fatou set consists of the basins of attraction of finitely many attracting
periodic points (including $w=\infty$).  We will use some hyperbolic theory to
show that if $z \in \DD_\eps(z_0)$,  then either $(z,w) \in W^s_\loc(J_{z_0})$ or $(z,w)$ is in the basin of one of
these finitely many attracting periodic points for $f|_{z=z_0}$.  Because most
classical treatments of hyperbolic theory are for diffeomorphisms, we appeal
to the discussion for endomorphisms that appears in \cite{JON_THESE}.

The natural extension of $J_{z_0}$ is
\begin{eqnarray*}
\widehat J_{z_0} = \{(x_i)_{i \leq 0} \, :  x_i \in J_{z_0} \, \mbox{and} \, f(x_i) = x_{i+1}\}.
\end{eqnarray*}
Associated to any prehistory $\hat x \in \widehat J_{z_0}$ is a local unstable manifold $W^u_\loc(\hat x)$,
which, in this case, is just an open disc in the vertical line $z=z_0$.

We will now verify that $\widehat J_{z_0}$ has a {\em local product structure};
see Definition 2.2 from \cite{JON_THESE}.  If $W^s_\loc(x) \cap W^u_\loc(\hat
y)$ is non-empty for some $x \in J_{z_0}$ and $\hat y \in \widehat J_{z_0}$,
then $W^s_\loc(x) \cap W^u_\loc(\hat y) = x$, since $W^u_\loc(\hat y)$ is
contained in the vertical line $\{z=z_0\}$ and $W^s_\loc(x)$ is the graph of a
function of $z$.  In particular, the intersection is a unique point.  Moreover,
the unique prehistory $\hat x$ of $x$ satisfying that $x_i \in W^u_\loc(f^i
(\hat y))$ for all $i \leq 0$ is in $\widehat J_{z_0}$, since every local
unstable manifold lies in $\{z=z_0\}$.

Because $\widehat J_{z_0}$ has a local product structure, Corollary 2.6 from
\cite{JON_THESE} gives that there is a neighborhood $V$ of $J_{z_0}$ in $\CC^2$
so that if $f^n(z,w) \in V$ for all $n \geq 0$, then $(z,w) \in W^s_\loc(x)$
for some $x \in J_{z_0}$.  
If $z \in \DD_\eps(z_0)$, then $f^n(z,w)$ must converge to the
line $\{z=z_0\}$ since $\DD_\eps(z_0) \subset U_0$.  Existence of the
neighborhood $V$ implies that $(z,w)$ is either in $W^s_\loc(J_{z_0})$
or is in the basin of attraction for one of the finitely many attracting
periodic points for $f|_{z=z_0}: \{z_0\} \times \CC \ra \{z_0\} \times \CC$.

\vspace{.1in}
We can now see that if $z \in \DD_\eps(z_0)$, then $J_z = W^s(J_{z_0}) \cap
(\{z\} \times \CC)$.  From the preceding paragraph we see that if $(z,w) \not
\in W^s_\loc(J_{z_0})$, then $w \not \in J_z$.  Meanwhile, if $(z,w) \in
W^s_\loc(J_{z_0})$, then the family of iterates $f^n|_{\{z\} \times \CC}$
cannot be normal at $w$, giving that $w \in J_z$. 

\vspace{.1in}

Repeatedly taking the preimages of $W^s_\loc(J_{z_{0}})$ under $f$ that
intersect the vertical line $\{z=z_{0}\}$, we obtain a stable set
$W^s(J_{z_0})$ over all of $U_0$.  
For any $z \in U_0$, we have that $(z,w)$ is either in $W^s(J_{z_0})$ or is in the basin attraction
for one of the finitely many attracting periodic points of $f|_{z=z_0}$.
We also have 
$J_z = W^s(J_{z_0}) \cap (\{z\} \times \CC)$.

By Corollary \ref{COR:SETUP2}, $C_{U_0} \subset U(f)$,
 so that $C_{U_0}$ is disjoint from $W^s(J_{z_{0}})$. 
A repeated application of the Inverse
Function Theorem, together with (\ref{EQN:INVARIANCE}), allows us to extend (in
the parameter $z$) the holomorphic motion of $J_{z_{0}}$ from $z \in
\DD_\eps(z_{0})$ to a holomorphic motion of $J_{z_{0}}$ over all of $U_{0}$.  
(For details, see the proof of Theorem 6.3 from \cite{HRUSKA_ROEDER}.)

The image of this holomorphic motion is precisely $W^s(J_{z_{0}})$.  By
Slodkowski's Theorem \cite{SLOD}, it will extend (in the
fiber $w$) as a holomorphic motion of the entire Riemann Sphere $\CP^1$ that is 
parameterized by $U_{0}$.  Therefore, every bounded Fatou
component of $f$ lying above $U_{0}$ is given as the holomorphic motion of a Fatou
component of $f|_{z=z_{0}}$.  Each of these is a disc (since
$f|_{z=z_{0}}$ is hyperbolic with connected Julia set), so  every bounded 
Fatou component lying above $U_0$ is homeomorphic to an open ball.

\vspace{0.1in}

Let $z_n$ be an $n$-th preimage under $p$ of $z_0$ and $U_n$ be the
component of $W^s(z_0)$ containing $z_n$.  We will use induction on $n$ to show that there is a holomorphic motion of
$J_{z_n}$ parameterized by $z \in U_n$ so that
\begin{enumerate}
\item  for every $z \in U_n$,  $J_z$ is obtained as the motion of $J_{z_n}$, and
\item the motion of any $w \in J_{z_n}$ is precisely stable curve of $w$.
\end{enumerate}
\noindent
Then, as above, Slodkowski's Theorem will give that every bounded
Fatou component of $f$ lying over $U_n$ is obtained as the holomorphic motion
of some component of $K_{z_n} \sm J_{z_n}$, parameterized by $z \in U_n$.  In
particular, every such Fatou component will be homeomorphic to an open ball.

The desired holomorphic motion already exists for $n = 0$, so we suppose that
it exists for $n=k$ in order to prove it for $n=k+1$.  

It is sufficient to show that the part $C_{U_{k+1}}$ of the horizontal critical locus
that lies above $U_{k+1}$ is in the Fatou set $U(f)$.  In that
case, the Implicit Function Theorem can be used to lift the entire holomorphic
motion of $J_{z_k}$ under $f$ to a holomorphic motion of $J_{z_{k+1}}$
parameterized by $U_{k+1}$.  The result will automatically satisfy (1) and (2).

Let $\C_{U_{k+1}}$ be some irreducible component of $C_{U_{k+1}}$.  
By Proposition \ref{PROP:SETUP1}, the family of restrictions of iterates
$\{f^n_{|\C_{U_{k+1}}}\}$ is normal.  Therefore, 
$f^{k+1} \C_{U_{k+1}}$ is either in the Fatou set $U(f)$ or is within the stable
curve of a single $x \in J_{z_0}$.   

Suppose the latter happens.  Then, every point of
$f^{n+1} \C_{U_{k+1}}$ lying over any $z \in U_0$ is in~$J_z$.  Let $\C_{\partial U_{k+1}}$ be
portion of the horizontal critical locus lying over $\partial U_{k+1} \subset
J_p$ that is in the closure of $\C_{U_{k+1}}$.  Lower semi-continuity 
of the mapping $z \mapsto
J_z$ (see  \cite[Prop. 2.1]{JON_SKEW}), combined with continuity of $f^{k+1}$, gives that
\begin{eqnarray*}
f^{k+1} \C_{\partial U_{k+1}} \subset \bigcup_{z \in \partial U_0} \{z\} \times J_z \subset \overline{\bigcup_{z \in J_p} \{z\} \times J_z} = J_2.
\end{eqnarray*}
However, since $f$ is Axiom-A, 
Proposition \ref{PROP:AXA} implies that $f^{k+1}
\C_{\partial U_{k+1}} \subset D_{J_p}$ is disjoint from $J_2$.  Therefore, we conclude that
every component of $C_{U_{k+1}}$ is in the Fatou set.

\vspace{.1in}

We now consider the Fatou components for $f$ on which $G(z,w) > 0$.  
Recall that $(z,w) \in W^s([0:1:0])$ if and only if $G_z(w) > 0$.
Since $J_z$ is connected for every $z \in J_p$, it follows from Theorem
\ref{THM:HR} that $W^s([0:1:0])$ is homeomorphic to an open ball.

It remains to consider Fatou components on which $G_p(z) > 0$ and $G_z(w) = 0$.
Let $U_\infty \subset \CP^1$ be the basin of attraction of $\infty$ under $p$.
Since $J_p$ is connected, $U_\infty$ is simply connected.  For simplicity of
exposition, we suppose that $0 \not \in U_\infty$ (otherwise, one can conjugate
$f$ by an appropriate translation in the $z$-coordinate).

Let us work in the system of local coordinates $t=T/Z$ and $u = W/Z$ in which
\begin{eqnarray*}
f(t,u) = \left(\frac{t^d}{P(1,t)}, \frac{Q(1,u,t)}{P(1,t)}\right) \equiv (r(t),s_t(u)),
\end{eqnarray*}
which is a rational skew product.  The first coordinate has $0$ as a
totally-invariant superattracting fixed point, whose basin of attraction 
$U_\infty$ is entirely contained in the copy of $\CC$ parameterized
by $t$.  The Fatou components of $f$ that remain to be studied each have
projection under $(t,u) \mapsto t$ lying entirely in $U_\infty$.  

The line $\Pi$ is given by $t=0$.  Since $f$ is Axiom-A, $J_\Pi$ is hyperbolic,
having a local stable manifold $W^s_\loc(J_\Pi)$ that is formed as the union of
stable curves of points $x \in J_\Pi$.  Each of these stable curves is
transverse to $\Pi$, since $\Pi$ is the unstable direction of $J_\Pi$.  Thus,
we can choose $\eps > 0$ sufficiently small so that $W^s_\loc(J_\Pi)$ is
described as a holomorphic motion of $J_\Pi$ that is parameterized by $t \in
\DD_\eps(0)$.

Using the same proof as for $\widehat J_{z_0}$, one can check that $\widehat J_\Pi$ has a local
product structure, so that if $(t,u)$ satisfies $|t| < \eps$, then $(t,u)$ is
in the basin of attraction for one of the finitely many attracting periodic
points of $f_\Pi$ (including $u = \infty$, which corresponds to $[0:1:0]$).
Similarly, if $|t| < \eps$, then
$J(f) \cap (\CC \times \{t\}) = W^s(J_\Pi) \cap (\CC \times \{t\})$.

By Corollary \ref{COR:SETUP2}, the horizontal critical locus
\begin{eqnarray*}
C_{U_\infty} = \left\{(t,u) \,\, :\,\, \frac{\partial s_t(u)}{\partial u} = 0 \,\, \mbox{and} \,\, t \in U_\infty\right\}
\end{eqnarray*}
lies in the Fatou set for $f$.  As before, the Implicit Function
Theorem can be used repeatedly to extend the holomorphic motion of $J_\Pi$ to
one that is parameterized by all of $U_\infty$.  Then, Slodkowski's Theorem can be used
to extend this holomorphic motion in the fiber $w$.  Each of the Fatou
components of $f$ that lies over $U_\infty$ (other than $W^s([0:1:0])$) is then a
holomorphic motion of a bounded Fatou component of the polynomial map $f_\Pi$,  parameterized by the simply connected domain
$U_\infty$.  This gives that each such component is homeomorphic to
an open bidisc.

\vspace{0.1in}
\indent
{\em Case 2: $f$ is disconnected:}\\
\indent
If $J_z$ is disconnected for any $z \in J_p$, then Theorem \ref{THM:HR} gives that
$H_1(W^s([0:1:0]))$ is infinitely generated.  It remains to consider the case that
$J_z$ is connected for every $z \in J_p$ and $J_p$ is disconnected.

Let $U_\infty$ be the basin of attraction of $z = \infty$ for $p(z)$.   Corollary
\ref{COR:SETUP2} gives that each of the irreducible components of $C_{U_\infty} = \pi_{|C}^{-1}(U_\infty)$
is in the immediate basin of attraction $W^s_0(\zeta_i)$ of one of the finitely
many attracting periodic points
\begin{eqnarray*}
\zeta_1 := \{\zeta_1^1,\ldots,\zeta_1^{n_1}\},\ldots, \zeta_m := \{\zeta_m^1,\ldots,\zeta_m^{n_m}\}
\end{eqnarray*}
\noindent
of $f_\Pi$.

The proof of Proposition 4.2 from \cite{HRUSKA_ROEDER} shows how to generate a
sequence of piecewise smooth one-cycles 
$\upsilon_n \subset U_\infty$ bounding regions $\Upsilon_n \subset \CC$ with the
property that $\left<\Upsilon_n,\mu_p\right> \rightarrow 0$.
Perturbing the $\upsilon_i$ slightly (if needed), we can suppose that
none of them lie on the finitely many critical values of $\pi: C \ra L$.
For each $i$ we let 
\begin{eqnarray*}
\gamma_i := \pi^{-1}(\upsilon_i) \,\,\, \mbox{and} \,\,\, \Gamma_i := \pi^{-1}(\Upsilon_i),
\end{eqnarray*}
so that each $\gamma_i$ is a finite union of closed loops in $C_{U_\infty}$ bounded by
a piecewise smooth chain $\Gamma_i$.  Moreover, $\pi: \Gamma_i \ra \Upsilon_i$ is a ramified cover of degree $d-1$.

Proposition \ref{PROP:SETUP1} gives $\left<\Gamma_i,T \right>  =
\left<\Gamma_i, T_p\right>$ and the
invariance properties of $\left<\cdot,\cdot \right>$ from Proposition \ref{PROP:PAIRING_INV} give
\begin{eqnarray*}
\left<\Gamma_i, T_p\right> = \left<\Gamma_i, \pi^*
\mu_p\right> = \left<\pi_* \Gamma_i , \mu_p\right> = (d-1)
\left< \Upsilon_i,\mu_p\right>.
\end{eqnarray*}
Since $\left< \Upsilon_i,\mu_p\right> \ra 0$, we have that
\begin{eqnarray*}
lk(\gamma_i,T) = \left<\Gamma_i,T \right> \ (\text{mod} \ 1)  = (d-1) \left< \Upsilon_i,\mu_p\right> \ (\text{mod} \ 1) \ra 0 \ (\text{mod} \ 1).
\end{eqnarray*}
Proposition \ref{THM:GENERAL_TECHNIQUE} gives that the first homology of
$\bigcup_{i} W^s_0(\zeta_i)$ is infinitely generated.  Hence at least one of
the finitely many Fatou components from this union has infinitely generated
homology.
\vspace{0.1in}
\noindent
$\Box$

\begin{rmk}\label{RMK:HYPERBOLICITY}
We will now discuss the question of whether Theorem \ref{THM} holds without the hypothesis that $f$ be Axiom-A.
To see that some condition on hyperbolicity is needed, consider the {\em disconnected} product mapping
\begin{eqnarray*}
f(z,w) = (z^2-6,w^2+i),
\end{eqnarray*}
which has that $J_p$ is a Cantor set and that $J_z$ is a (connected) dendrite for
every $z \in \CC$.   Since $J_z$ is connected for every $z \in J_p$, Theorem
\ref{THM:HR} gives that $W^s([0:1:0])$ is homeomorphic to an open ball.
Moreover, the closure of $W^s([0:1:0])$ is all of $\CP^2$, giving that it is the only Fatou component for $f$. 

Meanwhile, the author does not presently know of any example of a connected polynomial skew product having any Fatou component that is not homeomorphic to an open ball.

\vspace{0.05in}
One can somewhat weaken the hypothesis that $f$ be Axiom-A and still have the proof of Theorem \ref{THM} that is presented in this section hold.  
(We leave the Axiom-A condition in the statement of Theorem \ref{THM}, in order to keep it simple.)
More specifically:

\vspace{0.05in}
If $J_z$ is disconnected for some $z \in J_p$, then no hypothesis is needed in order to determine that $W^s([0:1:0])$ has infinitely generated first homology.
See Example \ref{EG:NON_AXA}.

\vspace{0.05in}
If $J_z$ is connected for every $z \in J_p$, but $J_p$ is disconnected, we only
need $f_\Pi$ has at least one attracting periodic orbit $\zeta_0$.  Then, some
some irreducible component of $C_{U_\infty}$ will be in  $W^s_0(\zeta_i)$ and the
technique from Case 2 can be applied to that component.

\vspace{0.05in}
If $f$ is connected, then one need only assume that $J_p, J_\Pi,$ and $J_{A_p}$
be hyperbolic.   In this case, a slight modification of the proof of Theorem
\ref{THM} is needed to address the issue of bounded Fatou components lying over
a preimage $U_{k+1}$ of some immediate basin $U_0$ for $p$.    Without assuming
that $f$ is Axiom-A, $f^{k+1} \C_{U_{k+1}}$ could potentially land in the
stable curve $W^s(x)$ of some $x \in J_{z_0}$.  However, one can still use the
Implicit Function Theorem to pull back all but finitely many leaves from the
holomorphic motion $W^s(J_{z_k})$.   By the Lambda Lemma
\cite{LYU_LAMBDA,MSS}, this holomorphic motion (that is missing a few leaves) can be extended to a holomorphic motion of all of 
$J_{z_{k+1}}$.  One then proceeds using Slodkowski's Theorem, as in the original proof.

\end{rmk}

\section{Examples}

We conclude this note by interpreting some of the examples of Axiom-A polynomial skew products from
\cite{JON_SKEW}, \cite{S}, and \cite{SUMI_III} in the context of Theorem \ref{THM}.

\begin{example}
Trivial examples of Axiom-A polynomial skew products exhibiting both behaviors from Theorem 
\ref{THM}
come from product maps $f(z,w) = (p(z),q(w))$ with
$p$ and $q$ hyperbolic and small perturbations thereof.
\end{example}

\begin{example}
The family 
\begin{eqnarray*}
f_a(z,w) = (z^2,w^2+az).
\end{eqnarray*}
is considered by DeMarco-Hruska in Section 5.3 from \cite{S}.
The map $f_a$ is Axiom-A if and only if $g_a(w) := w^2+a$ is hyperbolic \cite[Theorem
5.1]{S}.   (In that case, $f_a$ is in the same hyperbolic component as a product if and only if $g_a$ has an attracting fixed point.)
Moreover, $J_p$ is the unit circle $\{|z| = 1\}$ and $J_{e^{it}}$
is a rotation of angle $t/2$ of $J_{1} = J(g_a)$; see \cite[Lemma 5.5]{S}.
Thus, $f_a$ is connected if and only if $a$ is in the Mandelbrot set
$\mathcal{M}$.   

If $a$ is a hyperbolic point from $\mathcal{M}$, then 
Theorem \ref{THM} gives that the Fatou set of $f_a$ is a union of open balls.  Meanwhile, if $a \in \CC \sm \mathcal{M}$, then the basin of attraction $W^s([0:1:0])$ has infinitely generated first homology. 
(See also \cite[Theorem 6.3]{HRUSKA_ROEDER}, where the assumption that $f_a$ be Axiom-A is dropped.)
\end{example}

\begin{example}
Jonsson shows in
 Example 9.6 from \cite{JON_SKEW} that
\begin{eqnarray*}
f(z,w) = (z^2-6,w^2+3-z)
\end{eqnarray*}
is Axiom-A on $\CP^2$ and is not in the same hyperbolic component as any product mapping.  
There are two reasons why $f$ is disconnected:
\begin{enumerate}
\item $p(z) = z^2-6$ has a disconnected Julia set, and
\item $J_{z_0}$ is disconnected over the repelling fixed point $z_0 = -2 \in J_p$.
\end{enumerate}
Since $J_{z_0}$ is disconnected, Theorem \ref{THM:HR} gives that $W^s([0:1:0])$ has infinitely generated first homology.

The action on the line at infinity is given by $u \mapsto u^2$, where $u =
w/z$, so that $f$ has two superattracting fixed on that line: $[0:1:0]$ and
$[1:0:0]$.  Even though $J_p$ is disconnected, the proof of Case 2 from Theorem
\ref{THM} does not give that $W^s([1:0:0])$ has infinitely generated
first homology, because some points on the horizontal critical locus $C = \{w =
0\}$ have orbits escaping to $[0:1:0]$.  (We do not know if $W^s([1:0:0])$ has complicated first homology.)
\end{example}

\begin{example}
The following example by Sumi appears in \cite[Remark 4.13]{SUMI_III}.
Let
\begin{eqnarray*}
f(z,w) = \left(p(z), w^{2^n} + \left(\frac{z+\sqrt{R}}{2\sqrt{R}}\right) t_{n,\eps}(w)\right),
\end{eqnarray*}
where $R, \eps > 0$, $n \in \NN$, $p_R(z) = z^2-R$, $p = p_R^n$, $h_\eps(w) =
(w-\eps)^2 -1 + \eps$, and $t_{n,\eps}(w) = h^n_\eps(w) - w^{2^n}$.  For
appropriate choices of $\eps$ sufficiently small, $R$ sufficiently large, and
$n$ sufficiently large and even, one has that $f$ is Axiom-A, not in the same
hyperbolic component as any product mapping, with $J_p$ a Cantor set and
$J_z$ connected over every $z \in \CC$.

The action of $f$ on the line at infinity is given by $u \mapsto u^{2^n}$, thus
$f$ has two superattracting fixed points $[0:1:0]$ and $[1:0:0]$ on the line at
infinity.  Since $J_z$ is connected for every $z \in \CC$ and $J_p$ is
disconnected, the proof of Theorem \ref{THM} gives that $W^s([1:0:0])$ has
infinitely generated first homology.  Meanwhile, Theorem \ref{THM:HR} gives that
$W^s([0:1:0])$ is homeomorphic to an open ball.

\end{example}

\begin{example}\label{EG:NON_AXA}
Jonsson shows in Example 9.7 from \cite{JON_SKEW} that
\begin{eqnarray*}
f(z,w) = (z^2-2,w^2+2(2-z))
\end{eqnarray*}
has $J_2$ connected but is not a connected skew product
according to Definition \ref{DEFN} since $J_z$ is disconnected for $z=-2$.  
This skew product is not Axiom-A because $p(z)$ has its critical point in the Julia set $J_p = [-2,2]$.
However, Theorem \ref{THM:HR} can still be applied 
to see that $W^s([0:1:0])$ has infinitely generated first homology.
\end{example}

\bibliographystyle{plain}
\bibliography{newton.bib}

\end{document}